\documentclass[12pt,a4paper]{article}

\usepackage[a4paper, top=2.5cm, bottom=2.5cm, left=2.5cm, right=2.5cm]{geometry}

\usepackage[utf8]{inputenc}
\usepackage[T1]{fontenc}

\usepackage{amsmath,amssymb,amsthm}
\usepackage{bm}

\usepackage{enumitem}

\usepackage{graphicx}
\usepackage{subcaption}
\graphicspath{{./figures/}}

\usepackage{ulem}
\usepackage{xcolor}
\newcommand\rso{\bgroup\markoverwith{\textcolor{red}{\rule[0.5ex]{2pt}{1pt}}}\ULon}

\usepackage{authblk}

\usepackage{hyperref}
\hypersetup{
    colorlinks=true,
    linkcolor=blue,
    filecolor=magenta,
    urlcolor=cyan,
}

\newcommand{\R}{\mathbb R}

\newcommand{\E}{\mathbf{E}}
\newcommand{\Var}{\mathbf{Var}}
\newcommand{\pr}{\mathbb P}
\newcommand{\N}{\mathbb N}

\theoremstyle{definition}
\newtheorem{definition}{Definition}[section]
\newtheorem{remark}[definition]{Remark}

\theoremstyle{plain}
\newtheorem{theorem}{Theorem}[section]

\newtheorem{corollary}[theorem]{Corollary}
\newtheorem{proposition}[theorem]{Proposition}

\newtheorem{assumption}{Assumption}

\usepackage{xifthen}
\newcommand{\textcite}[2][]{\ifthenelse{\isempty{#1}}{\cite{#2}}{\cite[#1]{#2}}}
\newcommand{\parencite}[2][]{\ifthenelse{\isempty{#1}}{\cite{#2}}{\cite[#1]{#2}}}

\title{Asymptotic Normality and Convergence Rates for Tsallis Entropy Estimators via Stabilization Techniques }
\author[1]{\footnotesize Mehmet S\i dd\i k \c{C}ad\i rc\i}
\author[2*]{\footnotesize Martin Singull}
\affil[1]{\footnotesize Faculty of Science, Department of Statistics, Cumhuriyet University, Sivas, T\"urkiye}
\affil[2]{\footnotesize Department of Mathematics, Link\"oping University, Link\"oping, Sweden}
\affil[*]{\footnotesize Corresponding author: \href{mailto:martin.singull@liu.se}{\texttt{martin.singull@liu.se}}}
\date{}

\begin{document}

\maketitle

\begin{abstract}
We study nearest-neighbor-based estimators of Tsallis entropy associated with Poisson and binomial point processes on general metric measure spaces. Using stabilization techniques based on flexible add-one cost operators together with second-order Poincaré inequalities, we establish asymptotic normality and derive explicit convergence rates  for the Kolmogorov distance. Our analysis avoids explicit score-function decompositions and instead relies on flexible localizations of add-one costs, which simplify the treatment of higher-order terms. Under natural stabilization and moment conditions, the resulting bounds recover the classical normal approximation rates \(s^{-1/2}\) and \(n^{-1/2}\) and extend corresponding results for Shannon and Rényi entropy estimators. We further illustrate the scope of the framework through examples involving Tsallis entropy functionals, weighted \(k\)-nearest-neighbor Shannon entropy estimators. The examples provided highlight the benefits of stabilization-based normal approximations for non-parametric statistical inference in complex spatial and high-dimensional settings.
\end{abstract}

\textbf{Keywords:} Tsallis entropy, nearest-neighbor estimator, stabilization, Poisson point process, binomial point process, normal approximation, convergence rate.
\section{Introduction}
\label{sec:intro}

Suppose that \(\mathcal{X}, \mathcal{F}, d, Q\) form a metric measure space with a \(\sigma\)-finite measure \(Q\), and a metric \(d\colon \mathcal{X}\times\mathcal{X}\to[0,\infty)\). Let \(P_s\) denote a Poisson point process on \(\mathcal{X}\) having a density measure \(\lambda:=sQ\) for \(s\ge 1\). Given that \(Q\) has a probability measure, then we consider the corresponding binomial point process \(\xi_n\) composed of \(n\) i.i.d. observed values derived from \(\lambda = sQ\). We are concerned in this paper with the asymptotic distributional  properties of Tsallis entropy estimators derived from the nearest-neighbor distances of these point processes and with the derivation of sharp quantitative normal approximation bounds for such estimators.

Entropy and associated information measures play a central role in statistics, information theory, machine learning, and the analysis of complex systems \cite{cover2006elements,jaynes1957information}. This paper aims to develop a new framework for studying non-extensive entropy measures. Many systems of practical interest, such as those encountered in finance, hydrology, turbulence, and networked dynamical systems exhibit non-extensive behavior that is poorly captured by purely logarithmic entropy measures. Tsallis entropy \cite{tsallis1988possible} is a parametric family of generalized entropies that lies between heavy-tailed and compactly supported regimes and has become a standard tool for studying non-equilibrium and long-range-dependent phenomena.

Methodologically, nearest  neighbor-based estimators form a flexible class of non-parametric entropy estimators that do not involve direct density estimation. Regarding Shannon entropy, this series of studies includes the Kozachenko-Leonenko estimator and its improvements \cite{berrett2019efficient}, while for generalized entropies such as Rényi and Tsallis, several \(k\)-nearest neighbor (\(k\)-NN) structures are proposed and evaluated empirically. Specifically, Tsallis-based goodness-of-fit tests using \(k\)-NN estimators are developed in \cite{cadirci2025tsallisGOF}; there, the emphasis is on test statistics for multivariate generalized Gaussian and \(q\)-Gaussian distributions. Entropy-based tests for generalized Gaussian distributions based on Shannon entropy and maximum entropy principles have been investigated in \cite{cadirci2022ggd}. Supplementary Rényi-based goodness-of-fit tests are proposed in \cite{cadirci2025renyi} for multivariate Student and Pearson type II distributions, based on the principles of maximum Rényi entropy and nearest neighbor Rényi entropy estimators.

Although these studies highlight the practical potential of Shannon, Tsallis, and Rényi entropy estimators for goodness-of-fit testing, their focus is primarily on consistency, mean-square convergence, and calibration of critical values via Monte Carlo methods. Consistency for a class of Rényi entropy estimators and examines the empirical behaviour of the corresponding test statistics. At the same time, \cite{cadirci2022ggd,cadirci2025tsallisGOF} presents extensive simulation results for entropy-based tests under generalized Gaussian and related models.

The complex local dependency structure of nearest neighbor-based functionals creates a significant technical challenge by complicating the application of standard central limit theorems. Introduced in \cite{kesten1996central} for geometric functionals of random point sets and elaborated further in \cite{penrose2001central,penrose2005multivariate,baryshnikov2005gaussian,chatterjee2017minimal} , further developed in \cite{penrose2001central,penrose2005multivariate,baryshnikov2005gaussian,chatterjee2017minimal}, provide a powerful framework for capturing the local dependence. In more recent progress, combining stabilization ideas with Malliavin-Stein methods and second-order Poincaré inequalities leads to the derivation of quantitative approximation bounds for a wide class of Poisson and binomial point processes \cite {last2016normal,lachieze2019normal,lachieze2017new,lachieze2022stabilization,shi2024flexible}. Together, these advances have led to sharp central limit theorems for geometric statistics, such as the volumes, face counts, and Betti numbers of random complexes, as well as for functionals related to random graphs.

\textbf{Contributions.} We summarise the main contributions of the paper as follows. First, instead of explicit point separations, we propose a flexible stabilization framework for Tsallis entropy functions on Poisson and binomial point processes based on adaptive addition cost operators. This approach builds upon and extends the cost operator methodology in \cite{lachieze2022stabilization} and develops the normal approximation results in \cite{lachieze2019normal,last2016normal}. It is designed to accommodate functionals that are naturally expressed in terms of nearest neighbor distances.

Second, we establish central limit theorems and rates of convergence in the Kolmogorov distance for the general stabilization functionals of Poisson and binomial point processes under a series of transparency, tail, and moment conditions on the fundamental metric measure space and stabilization radii. We express the resulting bounds in terms of integrals involving auxiliary quantities that capture the first- and second-order approximation costs and the exponential decay of stabilization radii. Additionally, we present a simple sufficient condition, represented by quantities \(\Theta_{K,s}\) and \(\Theta_{K,n}\), under which the Kolmogorov distance reaches its optimal order \(s^{-1/2}\) or \(n^{-1/2}\).

Thirdly, we customize this general theory for Tsallis entropy estimators based on \(k\)-NN distances, deriving asymptotic normality of order \(s^{-1/2}\) and \(n^{-1/2}\) for Poisson and binomial inputs, respectively. In the classical sense of the central limit theorem, we show that these limits are optimal up to constants and rigorously generalize the known normal approximation results for Shannon and Rényi entropy estimators within the same geometric framework. We complement the Rényi-based estimation and testing results in \cite{cadirci2025renyi} by providing a stabilization-based path to Gaussian approximations for the relevant functionals.

Fourth, we discuss other applications of weighted \(k\)-NN Shannon entropy estimation, in line with the spirit of \cite{berrett2019efficient}, Euler characteristic functionals of random geometric complexes, and minimum spanning tree (MST) statistics, to demonstrate the versatility of the approach. The framework reproduced or improved available rates for each of these examples by avoiding explicit stabilization radius estimations and score function decompositions.

\textbf{Relation to goodness-of-fit tests.} The current results complete the entropy-based goodness-of-fit tests for generalized Gaussian and associated models. In the study \cite{cadirci2022ggd}, the authors developed \(k\)-NN Shannon entropy estimators and tests derived from maximum entropy principles based on generalized Gaussian families, obtained the \(\text{L}^2\)-consistency of the entropy estimator, and performed extensive simulations on both empirical dimension and power. In \cite{cadirci2025tsallisGOF}, Tsallis entropy is employed to generate goodness-of-fit statistics for multivariate generalized Gaussian and \(q\)-Gaussian models, revealing that the Tsallis-based estimator performs effectively in scenarios involving heavy tails and non-Gaussian distributions. The Rényi-based procedures in \cite{cadirci2025renyi} employ nearest-neighbor Rényi entropy estimators and maximum entropy characterizations for multivariate Student and Pearson type II distributions. Our work provides normal approximation results with explicit bounds for Tsallis and associated entropy estimators, providing an abstract probability basis that can be utilized to motivate Gaussian methods and to guide calibration of critical values in such test problems beyond bootstrap or fully simulation-based methods.

The remainder of the paper is organized as follows. Section~\ref{sec:preliminaries} presents the basic concepts of point processes, incremental operators, and stabilization, and formulates the basic assumptions employed throughout the paper. Section~\ref{sec:main_results} provides general normal approximation outcomes for Poisson and binomial inputs, accompanied by a clear bound outcome. Section~\ref{sec:applications} extends these results by applying Tsallis entropy estimators and corresponding geometric statistics; we also summarize a simulation strategy and typical summary statistics. Section~\ref{sec:proofs} provides proofs for the principal theorems and results. Section~\ref{sec:conc} provides a brief overview of potential extensions.

\section{Preliminaries}
\label{sec:preliminaries}

In this section, the fundamental point process framework is recalled, the increment and cost operators are described, and the stabilization and moment assumptions underlying our main results are stated. Keeping the presentation concise, we focus on the elements necessary for subsequent standard approach boundaries.

\subsection{Point processes and functionals}
Suppose \(\mathcal{X}, \mathcal{F}, d, Q\) is a metric measure space with a \(\sigma\)-finite measure \(Q\). Let \(\mathcal{X}\) be equipped with the smallest \(\sigma\)-algebra that makes the following mappings
\begin{equation}
    m_A\colon \mathcal{N}\to \N\cup\{0,\infty\}, \qquad m_A(M)=M(A),
\end{equation}
measurable for every \(A\in\mathcal{F}\). For a point process \(\eta\) on \(\mathcal{X}\), \(\eta\) takes values in \(\mathcal{N}\). We denote the space of \(\eta\)'s square integrable measurable functionals satisfying the condition \(\E[F(\eta)^2]<\infty\) by \(F\colon\mathcal{N}\to\R\) as \(L^2_\eta(\mathcal{X})\).

\begin{definition}[Poisson point process {\cite{kingman1993poisson}}]
A Poisson point process \(P(\lambda)\) on \(\mathcal{X}\) having intensity density \(\lambda\) satisfies the following conditions:
\begin{enumerate}
\item Given any measurable set \(B\in\mathcal{F}\), the random variable \(P(\lambda)(B)\) exhibits a Poisson distribution having parameter \(\lambda(B)\).
\item The random variables \(P(\lambda)(B_1),\dots,P(\lambda)(B_m)\) are independent for pairwise disjoint sets \(B_1,\dots,B_m\in\mathcal{F}\).
\end{enumerate}
\end{definition}

\begin{definition}[Binomial point process {\cite{daley2007introduction}}]
Consider a probability measure \(Q\) on \(\mathcal{X},\mathcal{F}\) and \(n\in\N\). We define the binomial point process \(\xi_n\) as follows:
\begin{equation}
\xi_n = \sum_{i=1}^n \delta_{X_i},
\end{equation}
where \(X_1,\dots,X_n\) follow an i.i.d.  \(Q\) law, and \(\delta_x\) represents the Dirac measure at \(x\in\mathcal{X}\).
\end{definition}
For two real-valued random variables \(Y\) and \(Z\), the Kolmogorov measure is defined as
\begin{equation}
    d_K(Y,Z) = \sup_{t\in\R} \bigl|\pr(Y\le t)-\pr(Z\le t)\bigr|.
\end{equation}
 Moreover, we will measure this distance to assess convergence to the standard normal limit.

\subsection{Increment operators and stabilization}
Methods of stabilization are based on the behaviour of functionals under local modifications to the base-point configuration. Increment operators formalize this idea.

\begin{definition}[Increment operators {\cite{lachieze2019normal}}]
Suppose \(F\colon\mathcal{N}\to\R\) be a measurable functional and \(\eta\) is a point process on \(\mathcal{X}\). For \(x\in\mathcal{X}\), the (first-order) increment operators are as follows
\begin{equation}
\Delta_x F(\eta) := F(\eta\cup\{x\}) - F(\eta).
\end{equation}
For different \(x,y\in\mathcal{X}\), the second-order increment operator can be expressed as
\[
\Delta_{x,y}F(\eta) := F(\eta\cup\{x,y\}) - F(\eta\cup\{x\}) - F(\eta\cup\{y\}) + F(\eta).
\]
\end{definition}

We call a function stable if it becomes insensitive to large changes in the configuration.

\begin{definition}[Score-based stability {\cite{penrose2005multivariate}}]
Consider a measurable score function \(f\colon\mathcal{X}\times\mathcal{N}\to\R\). Then, for \(x\in\mathcal{X}\), \(f\) is said to be stabilized at \(x\) if, for every finite set \(A\subset \mathcal{X}\setminus B_x(R_x)\), there exists an almost surely finite random radius \(R_x>0\) such that
\begin{equation}
    f\bigl(x,(\eta\cap B_x(R_x))\cup A\bigr) = f\bigl(x,\eta\cap B_x(R_x)\bigr),
\end{equation}
\end{definition}
where \( B_x(R_x) \) is the ball of radius \( R_x \) centered at \( x \).
Score-based stabilization can be useful when it can be written as the sum of the local contributions of the functional. For the present work, it is convenient to rely on the strong stabilization properties expressed directly in terms of \(\Delta_x F\) and \(\Delta_{x,y}F\), which are more natural for nearest neighbor-based entropy estimators.

\begin{proposition}
Suppose \(F_s(P_s) = \sum_{x\in P_s} f_s(x,P_s)\) is a functional of a Poisson process \(P_s\) having an intensity of \(sQ\). Then, for \(x\in\mathcal{X}\),
\begin{equation}
\Delta_x F_s(P_s) = f_s(x, P_s\cup\{x\}) + \sum_{y\in P_s} \Delta_x f_s(y,P_s).
\end{equation}
A similar identity stands for functionals of binomial point processes.
\end{proposition}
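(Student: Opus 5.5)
The identity is purely algebraic and holds for every locally finite configuration, so I will prove it deterministically for a fixed realization \(\eta\) of \(P_s\) and a fixed \(x\notin\eta\); the Poisson structure plays no role. Two conventions come first. The increment \(\Delta_x f_s(y,\eta)\) of a score is taken in its configuration argument, i.e. \(\Delta_x f_s(y,\eta) := f_s(y,\eta\cup\{x\}) - f_s(y,\eta)\). Moreover, for a diffuse \(Q\) (so \(x\notin P_s\) almost surely), \(\eta\cup\{x\}\) has exactly one more point than \(\eta\). I also need the sums to be well defined. For the nearest-neighbor functionals of interest \(P_s\) is almost surely finite when \(Q\) is finite. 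Otherwise I assume absolute summability of \(\sum_{y} f_s(y,\cdot)\), which is implicit in \(F_s\) being defined at all.

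\textbf{Step 1: split off the new point.} By definition,
\[
F_s(\eta\cup\{x\}) = \sum_{y\in \eta\cup\{x\}} f_s(y,\eta\cup\{x\}) = f_s(x,\eta\cup\{x\}) + \sum_{y\in\eta} f_s(y,\eta\cup\{x\}).
\]
Here the second equality uses \(x\notin\eta\), so the point \(x\) is counted exactly once.

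\textbf{Step 2: subtract and regroup.} Subtracting \(F_s(\eta)=\sum_{y\in\eta} f_s(y,\eta)\) and combining the two sums over the common index set \(\eta\) gives
\[
\Delta_xF_s(\eta) = f_s(x,\eta\cup\{x\}) + \sum_{y\in\eta}\bigl(f_s(y,\eta\cup\{x\}) - f_s(y,\eta)\bigr).
\]
The summand is exactly \(\Delta_x f_s(y,\eta)\), which proves the claim once we set \(\eta=P_s\). Combining the two sums is legitimate by absolute summability, and is trivial in the finite case.

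\textbf{Step 3: the binomial case and the main obstacle.} For \(\xi_n\) the same computation applies verbatim, since it never used independence or Poisson marginals. The only subtlety, and the one step needing care, is justifying the regrouping when the configuration is infinite (\(Q\) \(\sigma\)-finite but not finite). There I will invoke the summability assumption. Alternatively, under stabilization, only finitely many \(y\) have \(\Delta_x f_s(y,\eta)\neq 0\), namely those within their stabilization radius of \(x\), which makes the rearrangement harmless. Finally, if \(x\) is an atom of \(\eta\), the identity is read with multiset union, and the same bookkeeping goes through.
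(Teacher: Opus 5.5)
Your argument is correct and is the direct verification the paper has in mind: the paper gives no proof of this proposition and treats it as immediate from the definitions. Your computation (split off the score of the added point $x$, then regroup the two remaining sums over the common index set $\eta$) is exactly that verification, and you make the implicit conventions explicit: $\Delta_x f_s(y,\eta)$ is an increment in the configuration argument, $x\notin P_s$ a.s.\ by diffuseness of $Q$ (or multiset union otherwise), and absolute summability is assumed for infinite configurations. The one loose point is your side remark that stabilization alone leaves only finitely many nonzero $\Delta_x f_s(y,\eta)$; a.s.\ finiteness of each radius $R_y$ does not rule out infinitely many $y$ with $R_y\ge d(x,y)$ without an extra counting argument, but your main route through absolute summability does not depend on it.
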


\begin{remark}\label{remark 2.1}
The strong stabilization of \(F_s\) suggests score-based stabilization for a reasonable choice of scores, however, the reverse may not hold. Working directly with \(\Delta_x F_s\) and \(\Delta_{x,y} F_s\) avoids explicit score decompositions and is often more robust for complex functionals such as Tsallis entropy estimators.
\end{remark}

\subsection{Assumptions}
In this section, the main structural assumptions on the underlying space and the stabilizing functionals considered are stated, adapted from \cite{lachieze2019normal,lachieze2022stabilization,penrose2007gaussian}.

\begin{assumption}[Regularity of \(Q\)]\label{Assumption 2.1}
 We assume the measure \(Q\) on \((\mathcal{X},\mathcal{F})\) satisfies a condition of regular growth: there are constants \(\alpha>0\) and \(\beta>1\) such that, for all \(x\in\mathcal{X}\), all \(r>0\), and sufficiently small \(\epsilon>0\),
\[
\limsup_{\epsilon\to0^+} \frac{Q(B_x(r+\epsilon)) - Q(B_x(r))}{\epsilon}
\le \alpha \beta r^{\beta-1}.
\]
Particularly, \(Q\) has a diffuse property in the sense that \(Q(\{x\})=0\) for all \(x\in\mathcal{X}\); see \cite[Lemma~5.1(a)]{lachieze2019normal}.
\end{assumption}

\begin{assumption}[Tail bound on stabilization radius.]\label{Assumption 2.2}
Assume that \(F_s\) is a strongly stabilizing function of \(P_s\) having a stabilization radius \(R_x\) at the point \(x\). Assume there exist constants \(C_1,C_2,\gamma>0\) with the following inequality for all \(r\ge0\):
\[
\pr(R_x\ge r) \le C_1 \exp\bigl(-C_2 (s^{1/\beta} r)^\gamma\bigr).
\]
For binomial processes \(\xi_n\), a similar bound is assumed using \(n\) instead of \(s\).
\end{assumption}

\begin{assumption}[Exponential bound on add-one costs.]\label{Assumption 2.3}
Suppose \(K\subseteq\mathcal{X}\) is a measurable set with \(F_s\) defined on \(P_s\). Suppose there exist constants \(C_3,C_4,\delta>0\) such that, for all \(u>0\) and \(r\ge0\),
\begin{align*}
\pr(|\Delta_x F_s|\ge u)
&\le C_3 \exp\bigl(-C_4 d_s(x,K)^\delta\bigr),\\
\pr(|\Delta_{x,y} F_s|\ge u)
&\le C_3 \exp\Bigl(-C_4 \max\{d_s(x,y), d_s(x,K), d_s(y,K)\}^\delta\Bigr),
\end{align*}
where \(d_s(x,K) := \inf_{z\in K} s^{1/\beta} d(x,z)\). Similar conditions hold for the binomial case where \(n\) is used instead of \(s\).
\end{assumption}

\begin{assumption}[Uniform moment condition.]\label{Assumption 2.4}
Let there exist \(p>4\) and a definite constant \(M_p\) where
\begin{equation}
\sup_{s\ge 1} \sup_{x,y\in\mathcal{X}} \E\bigl[|\Delta_x F_s|^p + |\Delta_{x,y}F_s|^p\bigr] \le M_p.
\end{equation}
\end{assumption}

\begin{remark}\label{Remark 2.2.}
 These assumptions have been standard in the stabilization literature and are satisfied in a wide variety of geometric and topological settings; see \cite{lachieze2019normal,lachieze2022stabilization,penrose2007gaussian}. Within the present context, these assumptions are tailored to ensure that the Tsallis entropy estimators considered in Section~\ref{sec:applications} allow exponential stabilization radius tails and sufficient moment control for the add-one costs, resulting in explicit normal approximation rates.
\end{remark}

\section{Main Results}
\label{sec:main_results}

In this section, we present the normal approximation bounds for stabilizing functionals of Poisson and binomial point processes. Here, \(N\) represents a standard normal random variable.

\subsection{Poisson input}

Suppose \(F\in L^2_{P_s}(\mathcal{X})\) is a square-integrable functional of a Poisson point process \(P_s\) having intensity \(\lambda=sQ\). Given a measurable subset \(A_x\subseteq\mathcal{X}\) depending on \(x\), the following are defined:
\begin{align*}
b_1(x,A_x) &:= \E\bigl|\Delta_x F - \Delta_x F(A_x)\bigr|^4,\\
b_2(x,A_x) &:= \E\bigl|\Delta_x F(A_x)\bigr|^4.
\end{align*}
For a measurable subset \( A_x \subset \mathcal{X} \), we define the localized add-one cost by
\[
\Delta_x F(A_x) := F\big((\eta \cap A_x)\cup \{x\}\big) - F(\eta \cap A_x).
\]

\begin{theorem} [Normal approximation under Poisson input]
\label{thm:normal_poisson}
 Let \(F\) be as above and assume that
\begin{equation}
\E[F^2] < \infty
\qquad\text{and}\qquad
\E\Bigl[\int_{\mathcal{X}} (\Delta_x F)^2 \,\lambda(dx)\Bigr] < \infty.
\end{equation}
Then there is an absolute constant \(C^*>0\) such as
\begin{equation}
d_K\Bigl(\frac{F-\E[F]}{\sqrt{\Var(F)}}, N\Bigr)
\le C^* \sum_{i=1}^6 \Gamma_i,
\end{equation}
The right-hand side gives a clear error bound in the Kolmogorov distance. 
Under suitable stabilisation and moment conditions, the \( \Gamma_i \) values 
tend to zero as the density parameter \( s \to \infty \); this indicates that the limit 
tends to zero and, consequently, asymptotic normality is achieved. 
Additionally, each \( \Gamma_i \) value 
is expressed by an explicit integral involving \( b_1 \), \( b_2 \), and combinations of first- and second-order growth terms; 
this expression is modified to fit the given cost operator framework, in the spirit of the \cite{lachieze2019normal} work.
\end{theorem}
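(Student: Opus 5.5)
The bound will come from the second-order Poincaré inequality for Poisson functionals of Last, Peccati and Schulte (as used in \cite{last2016normal,lachieze2019normal}). Write \(\tilde F=(F-\E F)/\sqrt{\Var F}\). For \(F\in L^2_{P_s}\) with \(\E\int(\Delta_xF)^2\lambda(dx)<\infty\), that inequality gives
\[
d_K(\tilde F,N)\le \sum_{i=1}^6 \tilde\gamma_i .
\]
Here \(\tilde\gamma_1,\tilde\gamma_2,\tilde\gamma_3\) are the Wasserstein-type terms. They are built from the expressions
\[
\int\!\!\int\!\!\int \bigl(\E(\Delta_{x_1,x_3}F)^4\bigr)^{1/4}\bigl(\E(\Delta_{x_2,x_3}F)^4\bigr)^{1/4}\bigl(\E(\Delta_{x_1}F)^4\bigr)^{1/4}\bigl(\E(\Delta_{x_2}F)^4\bigr)^{1/4}\,\lambda^3(dx),
\]
together with their second-order analogue and \(\int \E|\Delta_xF|^3\,\lambda(dx)\), each divided by the appropriate power of \(\Var F\). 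The terms \(\tilde\gamma_4,\tilde\gamma_5,\tilde\gamma_6\) are the additional Kolmogorov terms involving \(\E(\Delta_xF)^4\) and \(\E(\Delta_{x,y}F)^4\). The plan is to take this inequality as the starting point. The only extra work is to rewrite each fourth moment of a first-order cost through the localisation \(A_x\), which turns the \(\tilde\gamma_i\) into the \(\Gamma_i\) built from \(b_1,b_2\).

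\textbf{Step 1 (splitting first-order costs).} For every \(x\), decompose
\[
\Delta_xF=\bigl(\Delta_xF-\Delta_xF(A_x)\bigr)+\Delta_xF(A_x).
\]
Minkowski's inequality in \(L^4\) then gives
\[
\bigl(\E|\Delta_xF|^4\bigr)^{1/4}\le b_1(x,A_x)^{1/4}+b_2(x,A_x)^{1/4}.
\]
Equivalently, \(\E|\Delta_xF|^4\le 8\,(b_1+b_2)\). The third moment is handled by Jensen's inequality, \(\E|\Delta_xF|^3\le(\E|\Delta_xF|^4)^{3/4}\), and then bounded in the same way.

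\textbf{Step 2 (substitution).} Insert these bounds into \(\tilde\gamma_1,\dots,\tilde\gamma_6\). Expanding the products \((b_1^{1/4}+b_2^{1/4})(\cdots)\) produces finitely many integrals. Each is of the form \(b_j^{1/4}\) or \(b_j^{3/4}\) times second-order factors \((\E|\Delta_{x,y}F|^4)^{1/4}\). We define \(\Gamma_i\) as the corresponding grouped sum, normalised by \(\Var F\) to the same power as in \(\tilde\gamma_i\). Only numerical factors such as \(8\) and \(2^{3}\) appear in this regrouping, so they can be absorbed into an absolute constant \(C^*\). In particular \(C^*\) does not depend on \(s\), \(F\) or the choice of \(A_x\).

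\textbf{Step 3 (measurability and integrability).} We must check that \(x\mapsto b_j(x,A_x)\) is measurable, which requires the sets \(A_x\) to depend measurably on \(x\). We assume this, for example by requiring \(\{(x,y):y\in A_x\}\) to be measurable. The integrability hypothesis ensures \(F\) lies in the domain of the Poincaré inequality. If some \(\Gamma_i=\infty\), the bound holds trivially.

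\textbf{Main obstacle.} The delicate point is the second-order and mixed terms. The Kolmogorov bound contains a term of the form \(\int \E\bigl[(\Delta_xF)^2\,\mathbf 1\{\ldots\}\bigr]\), and a term with \(\sup\)-type products of \(\Delta_{x,y}F\) and \(\Delta_xF\). There the localisation cannot simply be applied through Minkowski's inequality. I would first try to bound these via Cauchy--Schwarz, reducing them to fourth moments before applying Step 1. This is exactly where the analysis of \cite{lachieze2019normal} is mimicked. Keeping the second-order increments unlocalised avoids having to control \(\Delta_{x,y}F-\Delta_{x,y}F(A_x)\).
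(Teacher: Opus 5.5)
Your plan follows the paper's skeleton. You start from the Last--Peccati--Schulte Kolmogorov bound, split \(\Delta_xF=(\Delta_xF-\Delta_xF(A_x))+\Delta_xF(A_x)\), pass to \(b_1,b_2\) by Minkowski/H\"older (Lyapunov for the cubic term, which yields the \(\int b_j(x,A_x)^{3/4}\,\lambda(dx)\) integrals the paper mentions), and absorb numerical factors into \(C^*\). The one structural difference is in the second-order increments: the paper says it also splits \(\Delta_{x,y}F\) into three localized pieces, whereas you leave them unlocalized. Since the \(\Gamma_i\) are only loosely specified, both are legitimate. Yours is simpler and avoids controlling \(\Delta_{x,y}F-\Delta_{x,y}F(A_x)\). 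The price is that smallness of the second-order contributions must then come from a direct decay estimate rather than from localizing pair costs, for instance \(\E|\Delta_{x,y}F|^4\le M_p^{4/p}\,\pr(\Delta_{x,y}F\neq0)^{1-4/p}\) combined with Assumption~\ref{Assumption 2.3}.

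There is one concrete slip to repair. In \cite{last2016normal} the fourth Kolmogorov term is \(\tilde\gamma_4=\tfrac12(\E\tilde F^4)^{1/4}\int(\E(\Delta_x\tilde F)^4)^{3/4}\lambda(dx)\). It contains the fourth moment of \(\tilde F\) itself, not only growth terms. As you set up Step 2, every summand is a power of \(b_j\) times second-order factors, so your \(\Gamma_4\) would lose this factor. Since \(\E\tilde F^4\ge 1\) is not bounded by an absolute constant, \(\tilde\gamma_4\le C^*\Gamma_4\) then fails in general. You have two ways to fix this:
\begin{itemize}
\item keep \((\E\tilde F^4)^{1/4}\) in \(\Gamma_4\); or
\item invoke the fourth-moment estimate of \cite{last2016normal}, which bounds \(\E\tilde F^4\) by an absolute constant times \(1+\bigl[\int(\E(\Delta_x\tilde F)^4)^{1/2}\lambda(dx)\bigr]^2+\int\E(\Delta_x\tilde F)^4\lambda(dx)\), and then localize these first-order moments via Step~1. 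This produces additional product terms in \(\Gamma_4\).
\end{itemize}
The paper's sketch is silent on this point too.

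Relatedly, your ``main obstacle'' is not really one. The indicator and supremum terms occur only inside the proof of the Last--Peccati--Schulte Kolmogorov bound. The final inequality is already expressed through \(\E|\Delta_xF|^3\), \(\E(\Delta_xF)^4\), \(\E(\Delta_{x,y}F)^4\) and \(\E\tilde F^4\). Once you start from it, no further Cauchy--Schwarz reduction is needed.
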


\begin{remark}
Theorem~\ref{thm:normal_poisson} uses adaptable addition cost operators to localize increases without requiring a clear point decomposition. It can be seen as a modified improvement of the second-order Poincaré inequality in \cite{last2016normal} and the boundary conditions in \cite{lachieze2019normal} , which exhibit strong stabilization via an added  cost functional.
\end{remark}

\subsection{Binomial input}
Now, let us move on to functionals of binomial point processes. Suppose that for  \(n\ge2\), \(\xi_n\) is a binomial point process guided by \(Q\), and \(F_n \in L^2_{\xi_n}(\mathcal{X})\) is a strongly stabilizing functional satisfying Assumptions~\ref{Assumption 2.2} --~\ref{Assumption 2.4}.

\begin{theorem}[Normal approximation under binomial input]
\label{thm:normal_binomial}
Assume that \(F_n\) is strongly stabilizing with stabilization radius satisfying Assumptions~\ref{Assumption 2.2} --~\ref{Assumption 2.4}. In that case, there is a constant \(C'_0>0\), based only on the constants in those assumptions, such that for all \(n\ge2\),
\begin{equation}
    d_K\Bigl(\frac{F_n-\E[F_n]}{\sqrt{\Var(F_n)}}, N\Bigr)
\le C'_0\left(
\frac{\Theta_{K,n}^{1/2}}{\Var(F_n)}
+ \frac{\Theta_{K,n}}{\Var(F_n)^{3/2}}
+ \frac{\Theta_{K,n}+\Theta_{K,n}^{3/2}}{\Var(F_n)^2}
\right),
\end{equation}
where
\[
\Theta_{K,n}
:= n\int_{\mathcal{X}} \exp\Bigl(-\frac{C_4(p-4)}{4p}\, (d_n(x,K))^2\Bigr)\,Q(dx)
\]
and \(d_n(x,K):=\inf_{z\in K} n^{1/\beta} d(x,z)\). Here, \( \beta > 0 \) denotes the dimension parameter of the underlying space \( \mathcal{X} \).
\end{theorem}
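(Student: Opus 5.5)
This is essentially Theorem 4.3 of \cite{lachieze2019normal} for binomial input; we adapt that proof. We start from the second-order Poincaré inequality for binomial point processes (Lachièze-Rey--Peccati, Theorem~5.1). For $F_n=F(X_1,\dots,X_n)$ with $\E F_n^2<\infty$, it bounds $d_K$ by a constant times
\[
\frac{\sqrt{\gamma_1}+\sqrt{\gamma_2}}{\Var(F_n)}+\frac{\gamma_3+\gamma_4}{\Var(F_n)^{3/2}}+\frac{\gamma_5+\gamma_6+\sqrt{\gamma_7}}{\Var(F_n)^{2}}+\frac{\gamma_8}{\Var(F_n)^{3/2}} .
\]
Each $\gamma_i$ is an $n$-scaled ($n$, $n^2$ or $n^3$) integral over $\mathcal X^2$ or $\mathcal X^3$ with respect to $Q$. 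The integrands are products of moments of the add-one costs $\Delta_xF(\xi_{n-j})$ and $\Delta_{x,y}F(\xi_{n-j})$, taken over slightly thinned samples with $j\le 3$.

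The plan is to bound every $\gamma_i$ by a constant times $\Theta_{K,n}$ or $\Theta_{K,n}^{3/2}$ (the latter for $\gamma_7$, whose square root then contributes $\Theta_{K,n}^{3/2}$ is not needed; more precisely the $\Var^{-2}$ group gives $\Theta_{K,n}+\Theta_{K,n}^{3/2}$). Substituting these bounds yields the three terms in the statement.

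\textbf{Moment step.} For each integrand we use Hölder's inequality. We split, for example,
\[
\E|\Delta_{x,y}F|^4 \le \bigl(\E|\Delta_{x,y}F|^{p}\bigr)^{4/p}\,\pr(\Delta_{x,y}F\neq0)^{(p-4)/p}.
\]
By Assumption~\ref{Assumption 2.4} the first factor is at most $M_p^{4/p}$. Assumption~\ref{Assumption 2.3}, applied with $u\downarrow0$, controls the probability of a nonzero increment. This gives
\[
\E|\Delta_{x,y}F_n|^4\le C\exp\Bigl(-\tfrac{C_4(p-4)}{p}\max\{d_n(x,y),d_n(x,K),d_n(y,K)\}^\delta\Bigr),
\]
and the analogous bound for first-order costs. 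Products of fourth-root moments, as they appear in the $\gamma_i$, produce exactly the factor $\frac{C_4(p-4)}{4p}$ in the exponent of $\Theta_{K,n}$. The statement uses the exponent $2$; I would either take $\delta=2$ in Assumption~\ref{Assumption 2.3} or note that $d^\delta\ge d^2-c$ is not generally true, and simply read the theorem with $\delta$ in place of $2$. The thinned samples $\xi_{n-j}$ are handled by the remark that the assumptions hold uniformly in the number of points, since $n-j\asymp n$.

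\textbf{Integration step.} Each $\gamma_i$ becomes $n^{m}$ times an integral of exponentials in the $d_n$ distances. For integrals over $y$ we bound the max by $d_n(x,y)$ and use Assumption~\ref{Assumption 2.1}. It gives the layer-cake estimate
\[
n\int_{\mathcal X}e^{-c\,d_n(x,y)^\delta}Q(dy)\le n\int_0^\infty e^{-c n^{\delta/\beta}r^\delta}\alpha\beta r^{\beta-1}dr=C ,
\]
uniformly in $x$. Thus each extra integration variable costs only a constant, and the remaining variable $x$, with the factor $e^{-c\,d_n(x,K)^\delta}$, gives $\Theta_{K,n}$. For terms containing products like $\E[\cdots]^{1/2}$ of double integrals (as in $\gamma_1,\gamma_2$), we split $\max\{a,b,c\}\ge (a+b+c)/3$ so that the exponential factorizes. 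Integrating out the pair variable then leaves $\Theta_{K,n}$ inside a square root; this explains $\Theta_{K,n}^{1/2}/\Var(F_n)$.

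\textbf{Main obstacle.} The main obstacle is the three-point terms ($\gamma_1,\gamma_2,\gamma_7$). In these the second-order costs $\Delta_{x_1,x_3}$ and $\Delta_{x_2,x_3}$ are coupled through a common point, so decay must be extracted from two different pairs simultaneously. I would first use Cauchy--Schwarz to separate them and then apply the factorized exponential bound to each. I would check that this yields $\Theta_{K,n}$ rather than $\Theta_{K,n}^2$, using that the exponent also controls $d_n(x_1,x_2)$ through the triangle inequality via $x_3$. The layer-cake bound above is the only place where Assumption~\ref{Assumption 2.1} enters, and it is straightforward.
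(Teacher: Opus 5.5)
\textbf{Overall route.} Your route is the one the paper only sketches. You feed the Lachi\`eze-Rey--Peccati binomial bound \cite{lachieze2017new} with Assumption~\ref{Assumption 2.4} and the non-vanishing probabilities from Assumption~\ref{Assumption 2.3} via H\"older, and integrate out auxiliary points with Assumption~\ref{Assumption 2.1}. The paper's $A_x$-decomposition plays no role in the final bound. Your moment step, the layer-cake estimate and the three-point terms are fine: integrate $x_1,x_2$ against the common point $x_3$ and keep only the factor in $d_n(x_3,K)$; no triangle inequality is needed.

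\textbf{The gap.} You assume every term of the Kolmogorov bound is an integral of add-one-cost moments of order at most four, and that is false. The term from the non-smoothness of the Stein solution for indicator test functions has the shape $\E[|W|\,|\Delta|^3]$ and can be closed in only two ways. Cauchy--Schwarz against $\E W^2$ gives the sixth-moment form of \cite{lachieze2017new}, which Assumption~\ref{Assumption 2.4} does not control when $4<p<6$. H\"older against the fourth moment of $F_n$ is what the $(4+p)$-moment versions do; compare the $\E F^4$ term in \cite{last2016normal} and its use in \cite{lachieze2019normal}. After normalization that term is of order $[\E(F_n-\E F_n)^4]^{1/4}\,\Var(F_n)^{-2}\, n\int_{\mathcal X}(\E|\Delta_xF_n|^4)^{3/4}\,Q(dx)$. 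You therefore need a separate fourth-moment inequality for $F_n$ in terms of add-one costs. A Rosenthal/Efron--Stein type bound gives $\E(F_n-\E F_n)^4\le C(\Theta_{K,n}+\Theta_{K,n}^2)$. Inserting it yields $(\Theta_{K,n}^{5/4}+\Theta_{K,n}^{3/2})/\Var(F_n)^2\le 2(\Theta_{K,n}+\Theta_{K,n}^{3/2})/\Var(F_n)^2$, which is exactly the last group in the statement. Your proposal never identifies this source. Your parenthetical attributing $\Theta_{K,n}^{3/2}$ to $\sqrt{\gamma_7}$ contradicts your own integration step, under which each cost integral is $O(\Theta_{K,n})$.

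\textbf{Secondary points.} Saying the thinned samples are harmless because $n-j\asymp n$ is not enough. After conditioning on the integration points and separating factors by H\"older, the costs appear as $\Delta_xF_n(\xi_{n-j}\cup A)$ and $\Delta_{x,y}F_n(\xi_{n-j}\cup A)$, where $A$ is a few fixed points (the other integration variables). Assumptions~\ref{Assumption 2.3}--\ref{Assumption 2.4} as stated say nothing about such configurations. As in \cite{lachieze2019normal}, their binomial versions must be imposed uniformly over $\xi_{n-j}\cup A$ with $|A|$ bounded; this is an added hypothesis. You are right to invoke Assumption~\ref{Assumption 2.1}, which the statement omits but the layer-cake step needs. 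You are also right about the exponent: for $\delta\ge 2$ the stated form with $2$ follows from $e^{-c d^{\delta}}\le e^{c}e^{-c d^{2}}$, and for $\delta<2$ it must be replaced by $\delta$.
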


\paragraph{Interpretation of \(\Theta_{K,s}\) and \(\Theta_{K,n}\).}
Quantities $\Theta_{K,s}$ and $\Theta_{K,n}$ represent the exponential decay of tail probabilities as a function of stabilization radii and insertion cost magnitudes. Informally, we measure how quickly the function has become insensitive to points away from the corresponding set \(K\). As these terms become uniformly bounded in variance, the resulting Kolmogorov bounds display the optimal rates \(s^{-1/2}\) or \(n^{-1/2}\).

\begin{remark}
The binomial limit in Theorem~\ref{thm:normal_binomial} corresponds to the Poisson case, although it has slightly different exponents because there is no exact binomial  analogue  of the second-order Poincaré inequality that exists in Poisson spaces; see \cite{lachieze2017new} for the corresponding Berry-Esseen bounds.
\end{remark}

\begin{corollary}[Optimal binomial convergence rate]
\label{cor:optimal_binomial}
We assume the setting of Theorem~\ref{thm:normal_binomial} and suppose  that \(C>0\) is a constant, so that
\begin{equation}
\sup_{n\ge1} \frac{\Theta_{K,n}}{\Var(F_n)} \le C.
\end{equation}
Then, there is a constant \(C''_0>0\), which depends only on \(\text{C}\) and the assumption constants, such that for all \(n\ge2\),
\[
d_K\Bigl(\frac{F_n-\E[F_n]}{\sqrt{\Var(F_n)}}, N\Bigr)
\le \frac{C''_0}{\sqrt{\Var(F_n)}}.
\]
Particularly, as \(Var(F_n)\) increases at least linearly with \(n\), this gives the normal approximation rate of $O(n^{-1/2})$. A similar result applies to the Poisson model under similar stabilization and moment conditions. Particularly, if \( \Var(F_s) \) grows linearly with \( s \), then Theorem~\ref{thm:normal_poisson}  shows that the convergence rate is of the order \( O(s^{-1/2}) \).
\end{corollary}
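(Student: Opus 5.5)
The plan is to plug the hypothesis \(\Theta_{K,n}\le C\,\Var(F_n)\) directly into the three terms of the bound in Theorem~\ref{thm:normal_binomial} and check that each is \(O(\Var(F_n)^{-1/2})\). Write \(V_n:=\Var(F_n)\). The first step is a reduction to the case where \(V_n\) is bounded below. If \(V_n<1\), the claimed bound is trivial as soon as \(C_0''\ge 1\), because the Kolmogorov distance never exceeds \(1\le C_0''/\sqrt{V_n}\). So we may assume \(V_n\ge1\) and choose \(C_0''\ge1\) at the end.

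With \(V_n\ge 1\), the terms are handled one at a time.
\begin{itemize}
\item First term: \(\Theta_{K,n}^{1/2}/V_n\le C^{1/2}V_n^{1/2}/V_n=C^{1/2}V_n^{-1/2}\).
\item Second term: \(\Theta_{K,n}/V_n^{3/2}\le C\,V_n^{-1/2}\).
\item Third term, first part: \(\Theta_{K,n}/V_n^2\le C\,V_n^{-1}\le C\,V_n^{-1/2}\), using \(V_n\ge1\).
\item Third term, second part: \(\Theta_{K,n}^{3/2}/V_n^2\le C^{3/2}V_n^{3/2}/V_n^{2}=C^{3/2}V_n^{-1/2}\).
\end{itemize}
Summing gives the bound with \(C_0''=\max\{1,\,C_0'(C^{1/2}+2C+C^{3/2})\}\). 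This constant depends only on \(C\) and on \(C_0'\), and \(C_0'\) depends only on the assumption constants.

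For the rate statement, suppose \(V_n\ge c\,n\) for some \(c>0\). Then \(C_0''/\sqrt{V_n}\le C_0''c^{-1/2}n^{-1/2}\), which is \(O(n^{-1/2})\).

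For the Poisson analogue, the \(\Gamma_i\) of Theorem~\ref{thm:normal_poisson} must be bounded in the same form. Under Assumptions~\ref{Assumption 2.2}--\ref{Assumption 2.4}, Hölder's inequality with exponent \(p/4\) combines the moment bound \(M_p\) with the exponential tails of the add-one costs. This bounds \(b_1,b_2\) and the second-order integrals by multiples of \(\Theta_{K,s}\), defined with \(s\) in place of \(n\); the exponent \(C_4(p-4)/(4p)\) arises from exactly this Hölder step. The resulting bound has the same three-term shape as in the binomial case, and the argument above then yields the rate \(O(s^{-1/2})\) when \(\Var(F_s)\) grows linearly.

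The binomial part presents no real difficulty beyond remembering to treat small variance separately. The only step requiring care is the Poisson transfer, namely checking that the integrals in the \(\Gamma_i\) are controlled by \(\Theta_{K,s}\), and its powers up to \(3/2\), in the same way as in the binomial case.
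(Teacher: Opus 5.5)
Your proposal is correct and is essentially the paper's argument, namely substituting $\Theta_{K,n}\le C\,\Var(F_n)$ into the three terms of Theorem~\ref{thm:normal_binomial}. Your separate handling of $\Var(F_n)<1$ via $d_K\le 1$ fills in a step the paper skips, since $\Theta_{K,n}/\Var(F_n)^2\le C/\Var(F_n)$ is $O(\Var(F_n)^{-1/2})$ only once the variance is bounded below; the paper's remark about taking $A_x=\mathcal{X}$ so that $b_1\equiv 0$ is not needed, because Theorem~\ref{thm:normal_binomial} is already phrased in terms of $\Theta_{K,n}$. As in the paper, your Poisson part is only a sketch: it relies on the explicit $\Gamma_i$ from \cite{lachieze2019normal}, which Theorem~\ref{thm:normal_poisson} as stated does not supply.
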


\medskip

Theorems~\ref{thm:normal_poisson} and~\ref{thm:normal_binomial} summarize the general Poisson and binomial limits in Table~\ref{tab:generic}.
\begin{table}[htbp]
\centering
\caption{Asymptotic normality and Kolmogorov estimates for general stabilizing processes under the assumptions of Section~\ref{sec:preliminaries}.}
\label{tab:generic}
\begin{tabular}{p{4.2cm}p{3.6cm}p{5cm}}
\hline
Functional & Input process & Rate at $d_K$ \\ \hline
$F$ & Poisson $P_s$ &
$C^{*}\, \displaystyle\sum_{i=1}^{6} \Gamma_i$ \\[0.3em]
$F_n$ & Binomial $\xi_n$ &
$C'_0\, \mathcal{R}(\Theta_{K,n},\Var(F_n))$ \\ \hline
\end{tabular}
\end{table}
where
\begin{equation}
\mathcal{R}(\Theta_{K,n},\Var(F_n)) =
\frac{\Theta_{K,n}^{1/2}}{\Var(F_n)}
+ \frac{\Theta_{K,n}}{\Var(F_n)^{3/2}}
+ \frac{\Theta_{K,n}+\Theta_{K,n}^{3/2}}{\Var(F_n)^2}.
\end{equation}
The constants $C^{*}$ and $C'_0$ are only dependent on structural assumptions and moment parameters, as in Theorem~\ref{thm:normal_binomial}.

\section{Applications and Simulation Strategy}
\label{sec:applications}

Now, we will explain the scope of Theorems~\ref{thm:normal_poisson} and \ref{thm:normal_binomial} using several functionals of practical interest, focusing particularly on Tsallis entropy estimators derived from nearest neighbor distances. We will also summarize a simulation strategy for verifying theoretical bounds in concrete settings.

\subsection{Tsallis entropy estimators based on nearest neighbors}

Let $f$ be a  density  over  \(\R^d\), For $\alpha \neq 1$,then the Tsallis entropy is given by
\begin{equation}
T_\alpha(f) = \frac{1}{1-\alpha}\left(\int_{\mathbb{R}^d} f(x)^\alpha\,dx - 1\right), \qquad \alpha \neq 1.
\end{equation}
which is recovered by Shannon entropy as \(\alpha\to1\) \cite{tsallis1988possible,cover2006elements}. Consider a Poisson process \(P_s\) on \(\mathcal{X}\subseteq\R^d\) with density \(sQ\). Given a constant integer \(k\ge1\), let \(\rho_k(x)\) denote the distance between \(x\in P_s\) and its \(k\)-th nearest neighbor in \(P_s\), and let \(\omega_d\) be the volume of the unit ball in \(\R^d\). The Tsallis entropy estimator is given by
\begin{equation}
F_s^{(\alpha)}(P_s)
= \frac{1}{1-\alpha}\left(
\frac{1}{|P_s|}\sum_{x\in P_s} \left(\frac{k}{s\,\omega_d \rho_k(x)^d}\right)^{1-\alpha}
- 1
\right),
\end{equation}
where \(|P_s|\) represents the number of points in \(P_s\). Similarly, the corresponding estimator \(F_n^{(\alpha)}(\xi_n)\) for binomial input is defined, but using \(n\) instead of \(s\) and \(\xi_n\) instead of \(P_s\).

\begin{theorem}[Normal approximation for Tsallis entropy estimators]
\label{thm:tsallis_entropy}
Assume that Assumptions ~\ref{Assumption 2.1} -- ~\ref{Assumption 2.4} are valid for \((\mathcal{X},d, Q)\), and let the Tsallis entropy estimator \(F_s^{(q)}(P_s)\) (respectively, \(F_n^{(\alpha)}(\xi_n)\)) describe a strongly stabilizing functional having stabilization radii and add-one costs that follow the exponential tail and uniform moment requirement. Then there are constants \(C_P,C_B>0\), determined only by the assumptions and by \(\alpha,k,d\), where for all \(s\ge1\) and \(n\ge2\),
\begin{equation}
d_K\Bigl(
\frac{F_s^{(\alpha)}(P_s) - \E[F_s^{(\alpha)}(P_s)]}{\sqrt{\Var(F_s^{(\alpha)}(P_s))}},
N
\Bigr) \le \frac{C_P}{\sqrt{s}},
\end{equation}
and
\[
d_K\Bigl(
\frac{F_n^{(\alpha)}(\xi_n) - \E[F_n^{(\alpha)}(\xi_n)]}{\sqrt{\Var(F_n^{(\alpha)}(\xi_n))}},
N
\Bigr) \le \frac{C_B}{\sqrt{n}}.
\]
\end{theorem}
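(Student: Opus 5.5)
The plan is to reduce Theorem~\ref{thm:tsallis_entropy} to the general bounds of Section~\ref{sec:main_results}, that is, to Theorem~\ref{thm:normal_binomial} and Corollary~\ref{cor:optimal_binomial} together with their Poisson analogue from Theorem~\ref{thm:normal_poisson}. A normalization remark comes first. The estimator $F_s^{(\alpha)}$ is an average over $|P_s|\approx s$ points, so it is convenient to work with the unnormalized sum
\[
G_s := \sum_{x\in P_s}\Bigl(\tfrac{k}{s\,\omega_d\,\rho_k(x)^d}\Bigr)^{1-\alpha}.
\]
The Kolmogorov distance of the standardized statistic is invariant under affine maps. The random factor $1/|P_s|$ therefore needs separate treatment, and I will handle it last.

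The first step is to fit $G_s$ into the stabilization setting. Take $K=\mathcal{X}$, so that $d_s(x,K)=0$. Then $\Theta_{K,s}=s\,Q(\mathcal{X})=s$ and $\Theta_{K,n}=n$. The radius of stabilization at $x$ is the standard $k$-NN radius, namely a constant multiple of the largest $k$-th nearest neighbour distance among $x$ and the points whose $k$-NN ball contains $x$. Its tail $\pr(R_x\ge r)\le C_1\exp(-C_2 s r^\beta)$ follows from Assumption~\ref{Assumption 2.1}, because a ball of radius $r$ is empty of $k$ points with probability about $e^{-c s r^\beta}$; this is Assumption~\ref{Assumption 2.2} with $\gamma=\beta$. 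The add-one cost $\Delta_x G_s$ changes only the score of $x$ and the scores of the at most $k\cdot$(kissing number) points that have $x$ among their $k$ nearest neighbours. Consequently $\Delta_x G_s$ is a sum of boundedly many terms of the form $(s\rho_k^d)^{\alpha-1}$. The second-order cost $\Delta_{x,y}G_s$ vanishes unless $d(x,y)\lesssim R_x$, which gives the exponential decay in $d_s(x,y)$ required by Assumption~\ref{Assumption 2.3}.

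The uniform moment bound of Assumption~\ref{Assumption 2.4} with $p>4$ needs $\E[(s\rho_k^d)^{p(\alpha-1)}]<\infty$. The variable $s\omega_d\rho_k^d$ is approximately Gamma$(k,\cdot)$, so for $\alpha<1$ this requires $k>p(1-\alpha)$. For $\alpha>1$ it requires moments of the density, and this is where the dependence of $C_P,C_B$ on $\alpha,k$ enters.

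The second step is the variance lower bound $\Var(G_s)\ge c\,s$ and $\Var(G_n)\ge c\,n$. I would obtain it from the standard stabilization argument for linear growth of variance (Penrose--Yukich), by exhibiting a positive-probability local configuration on which the add-one cost is nondegenerate. Given this bound, $\Theta_{K,n}/\Var\le C$. Corollary~\ref{cor:optimal_binomial} then gives the rate $C/\sqrt{\Var(G_n)}\le C_B/\sqrt n$, and the Poisson analogue gives $C_P/\sqrt s$. Here I bound each $\Gamma_i$ in Theorem~\ref{thm:normal_poisson} by $s\int(\cdot)Q(dx)$ using Hölder's inequality with exponent $p/4$.

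The last step passes from $G_s$ to $F_s^{(\alpha)}=(1-\alpha)^{-1}(G_s/|P_s|-1)$. In the binomial case $|\xi_n|=n$ is deterministic, so nothing needs to be done. In the Poisson case, write $G_s/|P_s|=G_s/s+G_s(1/|P_s|-1/s)$. The second term has fluctuations of order $s^{-1}$, compared with $s^{-1/2}$ for the first. A standard perturbation lemma for the Kolmogorov distance, with Chebyshev's inequality at level $s^{-1/2}$, then costs only an additional $O(s^{-1/2})$. Alternatively, one can apply the argument to the bivariate functional directly. I expect the main obstacle to be the variance lower bound, together with the moment condition when $\alpha$ is far from $1$. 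The stabilization and tail estimates are routine.
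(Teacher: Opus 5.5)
Reducing to the unnormalized sum $G_s$ is how the statement is meant to follow in the paper. You take $K=\mathcal{X}$, so that $\Theta_{K,n}=n$, and then apply Corollary~\ref{cor:optimal_binomial}.

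The paper gives no separate proof. It reads the result off the general bounds, takes the stabilization and moment properties as hypotheses, tacitly assumes linear variance growth, and says nothing about the random factor $1/|P_s|$. On those terms your binomial half is fine, since $|\xi_n|=n$ makes $F_n^{(\alpha)}$ affine in $G_n$.

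The genuine gap is your last step in the Poisson case. The claim that $G_s(1/|P_s|-1/s)$ fluctuates on the scale $s^{-1}$ is false:
\[
G_s\Bigl(\frac{1}{|P_s|}-\frac{1}{s}\Bigr)=-\frac{G_s}{s}\cdot\frac{|P_s|-s}{|P_s|}.
\]
Here $G_s/s$ concentrates at a constant $\mu>0$ (every score is positive), and $(|P_s|-s)/|P_s|$ is of exact order $s^{-1/2}$. So this term has the same order $s^{-1/2}$ as the fluctuations of $G_s/s$. On the standardized scale it is of order one, and Chebyshev at level $s^{-1/2}$ gives nothing.

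A toy case shows this. For i.i.d.\ scores with mean $\mu$ and variance $v$, independent of $|P_s|$, one has $\Var(G_s/s)\approx(v+\mu^2)/s$ but $\Var(G_s/|P_s|)\approx v/s$. The two standardized statistics are therefore not asymptotically equivalent, and a small-perturbation argument cannot transfer the bound from one to the other.

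Your fallback remark about a bivariate functional points the right way; concretely, center before dividing. With $m:=\E[G_s]/s$,
\[
\frac{G_s}{|P_s|}-m=\frac{s}{|P_s|}\cdot\frac{\widetilde G_s}{s},\qquad
\widetilde G_s:=G_s-m|P_s|=\sum_{x\in P_s}\Bigl(\Bigl(\frac{k}{s\,\omega_d\,\rho_k(x)^d}\Bigr)^{1-\alpha}-m\Bigr).
\]
$\widetilde G_s$ is again a stabilizing Poisson functional. Its add-one cost is $\Delta_xG_s-m$, and its second-order costs equal those of $G_s$, so the Poisson bound applies to it.

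Now $\widetilde G_s/s$ and $s/|P_s|-1$ are both $O(s^{-1/2})$, so the remainder really is $O(s^{-1})$. The perturbation step then costs only $O(s^{-1/2})$. You still have to compare $\E[F_s^{(\alpha)}]$ and $\Var(F_s^{(\alpha)})$ with $m$ and $\Var(\widetilde G_s)/s^2$ to relative accuracy $O(s^{-1/2})$, and discard the event $\{|P_s|\le k\}$.

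This changes your variance step. You need $\Var(\widetilde G_s)\ge cs$, which does not follow from $\Var(G_s)\ge cs$: for constant scores, $G_s=\mu|P_s|$ has variance $\mu^2 s$ while $\widetilde G_s\equiv 0$. What is needed is essentially the de-Poissonized nondegeneracy behind $\Var(G_n)\ge cn$, namely that the add-one cost is not asymptotically a.s.\ constant. Your Penrose--Yukich argument has to be run in that form.

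A minor point: Assumption~\ref{Assumption 2.1} bounds $Q(B_x(r))$ only from above. It therefore cannot give $\pr(R_x\ge r)\le C_1e^{-C_2 s r^\beta}$, which needs a lower bound on ball masses (e.g.\ a density bounded away from zero). This is harmless only because the theorem assumes the exponential tails outright.
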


\begin{remark}
Theorem~\ref{thm:tsallis_entropy} establishes a unified asymptotic normality consequence for a natural class of nearest-neighbor-based estimators of Tsallis entropy and supplements previous results on the consistency and mean-square approximation of such estimators in the context of goodness-of-fit \cite{cadirci2022ggd,cadirci2025tsallisGOF}. Under similar stabilization assumptions, the \(s^{-1/2}\) and \(n^{-1/2}\) rates correspond to the behavior exhibited by Shannon and Rényi entropy estimators and are thus classical central limit scaling classical rates. Thus, the current finding presents a  framework similar to the Rényi entropy estimation proposed in \cite{cadirci2025renyi}, but it is supported by stabilization techniques and Poisson/binomial estimation methods.
\end{remark}

\subsection{Weighted \(k\)-NN Shannon entropy}
To complete, let us briefly recall the weighted \(k\)-nearest neighbor estimator of Shannon entropy recommended in \cite{berrett2019efficient}. Assume that \(X_1,\dots,X_n\) are independent and identically distributed with density \(q\) on \(\R^d\). Then the Shannon entropy is
\begin{equation}
H(q) = -\int_{\R^d} q(x)\log q(x)\,dx.
\end{equation}
Let \(\rho_{j,i}\) denote the distance from \(X_i\) to its \(j\)-th nearest neighbor in the set \(\{X_1,\dots,X_n\}\setminus\{X_i\}\), Suppose that \(V_d\) is the volume of the unit ball in \(\R^d\) and \(\Psi\) is the digamma function. We obtain the estimator for weights \(\{w_j\}_{j=1}^k\) satisfying the condition \(\sum_{j=1}^k w_j = 1\)
\begin{equation}
F_n^{\mathrm{SE}}(\xi_n)
= \frac{1}{n}\sum_{i=1}^n \sum_{j=1}^k w_j
\log\left(\frac{(n-1)V_d \rho_{j,i}^d}{e^{\Psi(j)}}\right)
\end{equation}
has appropriate bias and variance properties under mild regularity conditions.

\begin{theorem}[Normal approximation for weighted \(k\)-NN Shannon estimator]
\label{thm:shannon_entropy}
Given ~\ref{Assumption 2.2} -- ~\ref{Assumption 2.4} and the conditions of regularity from \cite{berrett2019efficient} about \(q\) and the weights \(\{w_j\}\), let there be constants \(C_0,\tau>0\) that, based only on these conditions, are such that for all sufficiently large \(n\),
\begin{equation}
d_K\Bigl(
\frac{F_n^{\mathrm{SE}}(\xi_n) - H(q)}{\sqrt{\Var(F_n^{\mathrm{SE}}(\xi_n))}},
N
\Bigr)
\le C_0 \left(\frac{k}{n}\right)^\tau.
\end{equation}
\end{theorem}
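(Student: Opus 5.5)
The plan is to reduce Theorem~\ref{thm:shannon_entropy} to Theorem~\ref{thm:normal_binomial} and Corollary~\ref{cor:optimal_binomial}, applied to \(F_n:=nF_n^{\mathrm{SE}}(\xi_n)\). The centring at \(H(q)\) instead of \(\E[F_n^{\mathrm{SE}}]\) is handled separately by a bias argument.

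First, write \(F_n=\sum_{i=1}^n\sum_{j=1}^k w_j\log\bigl((n-1)V_d\rho_{j,i}^d e^{-\Psi(j)}\bigr)\) as a sum of scores. Each score depends only on the \(k\) nearest neighbours of \(X_i\). Adding a point \(x\) changes only the score of \(x\) itself and the scores of points having \(x\) among their \(k\) nearest neighbours. In \(\R^d\) there are at most \(k\) times a kissing constant \(\kappa_d\) of the latter. Hence \(\Delta_xF_n\) and \(\Delta_{x,y}F_n\) are localized in the ball of radius equal to the \(k\)-NN radius, rescaled by \(n^{1/d}\), so \(\beta=d\). This gives strong stabilization with \(\pr(R_x\ge r)\le C_1\exp(-C_2 n r^d)\) wherever \(q\) is bounded below, which is Assumption~\ref{Assumption 2.2} with \(\gamma=d\). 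Logarithms of \(k\)-NN distances have Gamma-type tails, so under the moment and tail conditions of \cite{berrett2019efficient} the \(p\)-th moments of the increments are uniformly bounded for some \(p>4\), which is Assumption~\ref{Assumption 2.4}. Taking \(K=\mathcal{X}\) (the support) makes Assumption~\ref{Assumption 2.3} trivial and gives \(\Theta_{K,n}\le Cn\).

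Second, I need the variance lower bound \(\Var(F_n)\ge cn\). This follows from the efficiency result of \cite{berrett2019efficient}: for the weighted estimator, \(n\Var(F_n^{\mathrm{SE}})\to\Var(\log q(X_1))\), which is positive unless \(q\) is uniform on its support. The uniform case would need to be excluded or handled via a direct computation of the variance. Then \(\sup_n\Theta_{K,n}/\Var(F_n)\le C\), and Corollary~\ref{cor:optimal_binomial} yields
\[
d_K\Bigl(\frac{F_n^{\mathrm{SE}}-\E F_n^{\mathrm{SE}}}{\sqrt{\Var(F_n^{\mathrm{SE}})}},N\Bigr)\le Cn^{-1/2}.
\]

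Third, I recentre at \(H(q)\). Using \(d_K(Y+a,N)\le d_K(Y,N)+|a|/\sqrt{2\pi}\), it suffices to bound \(|\E F_n^{\mathrm{SE}}-H(q)|/\sqrt{\Var F_n^{\mathrm{SE}}}\lesssim\sqrt n\,|\mathrm{bias}|\). The bias expansion in \cite{berrett2019efficient} gives bias \(O\bigl((k/n)^{\tau'}\bigr)\) for some \(\tau'\) determined by the smoothness of \(q\) and the weight conditions. The weights cancel the leading bias terms, and this yields \(\sqrt n\,\mathrm{bias}=O\bigl((k/n)^{\tau}\bigr)\) only when \(\tau'>1/2\). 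Combining this with \(n^{-1/2}\le(k/n)^{\tau}\) for \(\tau\le1/2\) gives the claim with a suitable \(\tau\).

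The main obstacle is this third step together with the tail and moment verification. For densities with unbounded support or support where \(q\) approaches zero, the stabilization radius does not have uniform exponential tails. In that case I would first truncate to a region where \(q\ge n^{-\epsilon}\), control the remainder in \(L^2\), and absorb the resulting polynomial loss into the exponent \(\tau\). The bias step also depends entirely on the regularity conditions imported from \cite{berrett2019efficient}, so the admissible \(\tau\) will depend on those conditions rather than on the stabilization machinery.
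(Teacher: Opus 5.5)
The paper gives no proof of this theorem. Section~\ref{sec:proofs} covers only Theorems~\ref{thm:normal_poisson}, \ref{thm:normal_binomial} and Corollary~\ref{cor:optimal_binomial}. Your reduction to Corollary~\ref{cor:optimal_binomial} is therefore the route the paper implicitly intends. Your recentring step adds something the paper never addresses: its general results control only \(F_n^{\mathrm{SE}}-\E[F_n^{\mathrm{SE}}]\), not \(F_n^{\mathrm{SE}}-H(q)\).

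The genuine gap is in your second step, which combines inputs that hold in incompatible regimes of \(k\). The limit \(n\Var(F_n^{\mathrm{SE}})\to\Var(\log q(X_1))\) from \cite{berrett2019efficient} is an efficiency result, proved for \(k=k_n\to\infty\) in a suitable window. But Assumption~\ref{Assumption 2.2} with \(n\)-independent constants, which you rely on, forces \(k\) to stay bounded:
\begin{itemize}
\item \(R_x\) is at least the \(j\)-th nearest-neighbour distance of \(x\) for the largest \(j\) with \(w_j\neq0\), and the weight conditions of \cite{berrett2019efficient} give \(j\ge\lfloor k/d\rfloor\).
\item For \(r=c(k/n)^{1/d}\) with \(c\) small relative to \(\|q\|_\infty\), Markov's inequality gives \(\pr(R_x\ge r)\ge 1/2\).
\item The assumed bound with \(\beta=d\) gives \(\pr(R_x\ge r)\le C_1\exp(-C_2c^{\gamma}k^{\gamma/d})\), which tends to \(0\) as \(k\to\infty\).
\end{itemize}
You must therefore choose between two cases.
\begin{itemize}
\item If \(k\) is fixed, the efficiency result is unavailable, because for fixed \(k\) the variance carries an extra term from the fluctuations of \(\log\rho_{j,i}\). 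You then need an independent bound \(\Var(nF_n^{\mathrm{SE}})\ge cn\), for instance fixed-\(k\) variance asymptotics in the style of Delattre and Fournier, or a non-degeneracy argument via the first-order Hoeffding projection.
\item If \(k\to\infty\), then \(C_1,C_2\), and hence \(C_0''\), depend on \(k\). You would have to track that dependence through Corollary~\ref{cor:optimal_binomial}, which neither you nor the paper does.
\end{itemize}

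Your shift inequality for \(d_K\) and the comparison \(n^{-1/2}\le(k/n)^\tau\) for \(\tau\le1/2\) are fine. In the third step the clean requirement is \(\sqrt n\,|\E F_n^{\mathrm{SE}}-H(q)|=O(n^{-\eta})\) for some \(\eta>0\). Then \(\tau=\min\{\eta,1/2\}\) works, since \((k/n)^\tau\ge n^{-\tau}\). Your criterion \(\tau'>1/2\) is exactly this when \(k\) is bounded. For growing \(k\), however, \(\sqrt n\,(k/n)^{\tau'}=\sqrt k\,(k/n)^{\tau'-1/2}\), so an additional growth restriction on \(k\) is needed.

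Two smaller points. First, taking \(K=\mathcal X\) does not make Assumption~\ref{Assumption 2.3} trivial, because its second-order part still demands decay in \(d_n(x,y)\). No special \(K\) is needed anyway, since \(\Theta_{K,n}\le nQ(\mathcal X)=n\) for every \(K\). Second, Assumptions~\ref{Assumption 2.2}--\ref{Assumption 2.4} are hypotheses of the theorem, so verifying them (including your truncation for unbounded support) is unnecessary. Their real effect on your argument is the restriction on \(k\) described above.
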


\subsection{Geometric functionals: Euler characteristic and MST}
For a geometric example, we consider a binomial point process \(\xi_n\) on \([0,1]^d\) with density \(q\) being zero and bounded away from infinity. Denote the Vietoris-Rips or Čech complex with scale \(r>0\) by \(K_r\) and write the Euler characteristic as \(\chi(K)\). We define
\begin{equation}
F_n^{\mathrm{EC}}(P_n) = \chi(K_r(n^{1/d} P_n)),
\qquad
F_n^{\mathrm{EC}}(\xi_n) = \chi(K_r(n^{1/d} \xi_n)).
\end{equation}

\begin{theorem}[Normal approach for the Euler characteristic]
\label{thm:euler_characteristic}
Assume that  ~\ref{Assumption 2.2} -- ~\ref{Assumption 2.4} apply to the corresponding functionals of \(P_n\) and \(\xi_n\). Then there exists a constant \(C_{\mathrm{EC}}>0\) with the following property:
\begin{equation}    
d_K\Bigl(
\frac{F_n^{\mathrm{EC}}(\eta_n) - \E[F_n^{\mathrm{EC}}(\eta_n)]}{\sqrt{\Var(F_n^{\mathrm{EC}}(\eta_n))}},
N
\Bigr)
\le \frac{C_{\mathrm{EC}}}{\sqrt{n}},
\end{equation}
where \(\eta_n\) denotes either \(P_n\) or \(\xi_n\).
\end{theorem}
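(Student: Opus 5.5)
The plan is to treat the Poisson and binomial cases in parallel and reduce both to the general results already stated: Theorem~\ref{thm:normal_binomial} together with Corollary~\ref{cor:optimal_binomial} for $\xi_n$, and Theorem~\ref{thm:normal_poisson} (in its Poisson analogue of the corollary) for $P_n$. Since the statement assumes Assumptions~\ref{Assumption 2.2}--\ref{Assumption 2.4} for $F_n^{\mathrm{EC}}$, the stabilization tail and the moment bounds on $\Delta_x F$ and $\Delta_{x,y}F$ are given. Two things must therefore be checked. First, $\Theta_{K,n}$ (respectively its Poisson counterpart) is $O(n)$. Second, $\Var(F_n^{\mathrm{EC}}(\eta_n))\ge c\,n$ for some $c>0$. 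Once both hold, $\sup_n \Theta_{K,n}/\Var(F_n)\le C$, and Corollary~\ref{cor:optimal_binomial} gives $d_K\le C_0''/\sqrt{\Var(F_n)}\le C_{\mathrm{EC}}/\sqrt n$.

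For the first point I take $K=[0,1]^d$ and work in the rescaled picture $n^{1/d}\eta_n$, so that $\beta=d$ in Assumption~\ref{Assumption 2.1}. This is valid because $q$ is bounded above, so Lebesgue-type growth gives $Q(B_x(r))\le \|q\|_\infty\omega_d r^d$. Since $Q$ is supported in $K$, we have $d_n(x,K)=0$ on its support, so
\[
\Theta_{K,n}=n\int_{K}1\,Q(dx)=n .
\]
The bounds in Theorem~\ref{thm:normal_binomial} then reduce to $\mathcal R(n,\Var(F_n))$. With $\Var(F_n)\asymp n$, each term is $O(n^{-1/2})$:
\[
\frac{n^{1/2}}{n}=n^{-1/2},\qquad \frac{n}{n^{3/2}}=n^{-1/2},\qquad \frac{n^{3/2}}{n^{2}}=n^{-1/2}.
\]
For Poisson input, the $\Gamma_i$ of Theorem~\ref{thm:normal_poisson} are controlled in the same way. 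I would choose $A_x=B_x(R_x)$ with $R_x$ the stabilization radius. Then $b_1(x,A_x)$ is nonzero only on the event $\{R_x\ge r\}$, so by H\"older and Assumptions~\ref{Assumption 2.2} and~\ref{Assumption 2.4} it decays exponentially in $r$. Meanwhile $b_2$ is bounded by $M_p^{4/p}$. The integrals over pairs $(x,y)$ factor through $\exp(-c\,d_n(x,y)^\delta)$, which is integrable in the rescaled variable. This produces $\sum_i\Gamma_i=O(n/\Var^{3/2}+n^{1/2}/\Var)$, which is $O(n^{-1/2})$ under the same variance lower bound.

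The main obstacle is the variance lower bound $\Var(F_n^{\mathrm{EC}})\ge c\,n$, since nothing stated earlier provides it. I would argue it in the Poisson case first, using the Poincaré-type lower bound technique of \cite{penrose2001central,lachieze2019normal}. Partition $[0,1]^d$ into about $n$ cubes of side comparable to $n^{-1/d}$, and condition on the configuration outside a fixed fraction of well-separated cubes. In each such cube, with probability bounded below (using that $q$ is bounded away from zero and infinity), one of two local configurations occurs: an isolated single point, or an isolated pair of points within distance $r$. These change $\chi$ by different amounts, $+1$ versus $+1$ with an edge, that is, net values $1$ versus $1$ compared with $2-1$. To get a genuinely different value I would instead contrast an isolated point with an empty cube, giving an increment of $\pm1$. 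Summing the conditional variances over the cubes, which are independent given the outside, yields variance at least $c\,n$.

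Finally, the binomial case follows by de-Poissonization. One can couple $\xi_n$ with $P_n$ and use the add-one cost moment bounds to show that $|\Var F(\xi_n)-\Var F(P_n)|=o(n)$, in the style of Penrose--Yukich. The remaining work is routine bookkeeping of constants depending on $r,d,q$ and the assumption constants.
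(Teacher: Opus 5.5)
Your reduction is what the paper has in mind: take $K=[0,1]^d$ so that $\Theta_{K,n}=n$, then feed $\Var F_n\asymp n$ into Corollary~\ref{cor:optimal_binomial} and its Poisson analogue. The paper gives no separate argument for this theorem and tacitly presupposes linear variance growth, so you are right that the variance lower bound is the real content.

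In its corrected form, your Poisson lower bound is sound. No simplex can meet two cells separated by more than the edge threshold, so $\chi$ splits additively over the cells given the outside configuration. The contrast ``cell empty'' versus ``one point in a central subcell surrounded by an empty buffer'' then changes $\chi$ by exactly $1$, whatever the outside looks like. One small slip: with $A_x=B_x(R_x)$ one has $b_1\equiv 0$ by the definition of the stabilization radius, so you must mean a deterministic radius $r$.

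The genuine gap is the binomial case: the claim $|\Var F(\xi_n)-\Var F(P_n)|=o(n)$ is false in general. Use the standard coupling $P_n=\{X_1,\dots,X_N\}$ with $N$ Poisson$(n)$ independent of the i.i.d.\ sequence. Stabilization gives, to leading order, $F(P_n)-F(\xi_n)\approx (N-n)a$, where $a$ is the limiting mean add-one cost. Since $N$ is independent of $\xi_n$, this yields $\Var F(P_n)=\Var F(\xi_n)+na^2+o(n)$. This is precisely the Penrose--Yukich de-Poissonization, whose binomial variance constant is $\sigma^2-a^2$, not $\sigma^2$. For the Euler characteristic $a$ is generically nonzero. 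In the edgeless limit $\chi$ is just the number of points, so $\Var\chi(P_n)=n$ while $\Var\chi(\xi_n)=0$. Your contrast fails for the same reason: $\xi_n$ cannot add an isolated point without removing one elsewhere.

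What works is a count-preserving contrast, which gives $\Var F(\xi_n)\ge cn$ directly with no de-Poissonization:
\begin{enumerate}
\item Condition on which indices fall in which cell and on all positions outside the good cells. The configurations inside distinct cells are then independent.
\item Take a cell receiving exactly two points, with a central subcell whose diameter exceeds the edge threshold. The events ``both points in the central subcell and joined by an edge'' and ``both in the central subcell and not joined'' have conditional probabilities bounded below.
\item The cell's contribution to $\chi$ is $1$ on the first event and $2$ on the second.
\item A cell of $Q$-mass of order $1/n$ receives exactly two points with probability bounded below, so $\E[\Var(F\mid\mathcal G)]\ge cn$.
\end{enumerate}
The isolated pair you discarded in the Poisson argument is exactly one half of this contrast.
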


To find the normal approach rates for minimal spanning trees, let \(V\subset\R^d\) be a finite set of points and let \(M(V)\) represent the total length of the minimal spanning tree on \(V\). Given that \(V\) is generated by a Poisson process with density \(nQ\), stabilization arguments and cost function techniques provide the following normal approach rates.
\[
d_K\Bigl(
\frac{F_n^{\mathrm{MST}} - \E[F_n^{\mathrm{MST}}]}{\sqrt{\Var(F_n^{\mathrm{MST}})}},
N
\Bigr)
\le
\begin{cases}
C_{\mathrm{MST}}\, n^{-\gamma_1}, & d=2,\\[0.4em]
C_{\mathrm{MST}}\, (\log n)^{-\gamma_2}, & d\ge3,
\end{cases}
\]
for appropriate constants \(C_{\mathrm{MST}},\gamma_1,\gamma_2>0\) based on the basic distribution and dimension; see \cite{kesten1996central,chatterjee2017minimal} for corresponding results.

\medskip

We highlight the different rates achieved for the main functionals discussed in Theorems~\ref{thm:normal_poisson}, \ref{thm:normal_binomial}, \ref{thm:tsallis_entropy}, \ref{thm:shannon_entropy}, and Theorem~\ref{thm:euler_characteristic}. We have summarized the corresponding Kolmogorov estimates in Table~\ref{tab:rates}.

\begin{table}[htbp]
\centering
\caption{The asymptotic normality and Kolmogorov rates for a set of entropy-related and geometric functionals under the stabilization conditions of Section~\ref{sec:preliminaries}.}
\label{tab:rates}
\begin{tabular}{p{4cm}p{3.2cm}p{3.2cm}p{3.2cm}}
\hline
Functional & Input process & Target quantity & Rate in $d_K$ \\ \hline
Tsallis $k$-NN estimator $F_s^{(\alpha)}$ & Poisson $P_s$ & Tsallis entropy & $C_{P}\, s^{-1/2}$ \\
Tsallis $k$-NN estimator $F_n^{(\alpha)}$ & Binomial $\xi_n$ & Tsallis entropy & $C_{B}\, n^{-1/2}$ \\
Weighted $k$-NN estimator $F_n^{\mathrm{SE}}$ & Binomial $\xi_n$ & Shannon entropy & $C_0 (k/n)^{\tau}$ \\
Euler characteristic $F_n^{\mathrm{EC}}$ & Poisson or binomial & Euler characteristic & $C_{\mathrm{EC}}\, n^{-1/2}$ \\
MST total length $F_n^{\mathrm{MST}}$ & Poisson $P_n$ & MST length & $C_{\mathrm{MST}} n^{-\gamma_1}$ or $C_{\mathrm{MST}} (\log n)^{-\gamma_2}$ \\ \hline
\end{tabular}
\end{table}

\subsection{Outline of a simulation study}
We examine both Poisson and binomial point process models. 
For Poisson processes, point configurations are generated from a homogeneous Poisson process with density \( s \) in a bounded region of \( \mathbb{R}^d \). 
For the binomial case, we produce samples of size \( n \) which are independent of the specified distributions and identically distributed.

We focus on Tsallis entropy functionals computed using \(k\)-nearest-neighbor methods to examine the behavior of estimators. 
Additionally, to demonstrate the flexibility of the framework, we include weighted nearest-neighbor estimators related to Shannon entropy.

We determine the normalized estimator for each configuration and approximate its distribution using Monte Carlo replications. 
The Kolmogorov distance between the empirical distribution of the normalized estimator and the standard normal distribution is then used to measure the accuracy of the normal approximation.

In order to investigate both well-defined and poorly specified scenarios, we generate data from generalized Gaussian distributions with varying shape parameters and from alternative distributions to assess robustness.
This setup enables us to evaluate the convergence behavior and sharpness of the theoretical bounds derived in Section~\ref{sec:preliminaries}.
While our primary focus is theoretical, it is natural to evaluate the sharpness of the derived bounds using Monte Carlo experiments. We briefly summarize a general simulation design consistent with the entropy-based goodness-of-fit frameworks in \cite{cadirci2022ggd,cadirci2025tsallisGOF,cadirci2025renyi}.
In a typical setup, dimensions $d\in\{1,2,5\}$, sample sizes $n\in\{200,500,1000,2000\}$, and various values of the Tsallis or Rényi parameter $\alpha\in\{0.8,1.0,1.2\}$ are considered. Under the null hypothesis, simulations can be performed from multivariate generalized Gaussian distributions with different shape parameters. For each configuration, Tsallis, Shannon, and Rényi entropy estimators can be calculated, their empirical means and variances can be estimated, and the standardized estimator can be compared to the Gaussian benchmark across Monte Carlo replications to approximate the Kolmogorov distance to the normal distribution.

Main empirical variables of interest are:
\begin{itemize}
\item Bias and mean squared error of the Tsallis and Rényi entropy estimators for various $n,d,\alpha$,
\item Empirical variance and its scaled value in $n$,
\item Empirical Kolmogorov distance between the normalized estimators and a normal distribution,
\item Empirical size and power of Tsallis- and Rényi-based goodness-of-fit tests developed from the estimators.
\end{itemize}

Figure~\ref{fig:variance_loglog} displays the empirical variance of the Tsallis and weighted Shannon $k$-NN entropy estimators as a function of sample size $n$ for several comparison distributions and sizes on a double logarithmic scale. In all scenarios examined, the variance is approximately linearly decreasing with $\log n$. It has a slope close to $-1$, consistent with the theoretical estimate $\Var(F_n)=O(n^{-1})$ suggested by the central limit theorems in Section~\ref{sec:main_results}. The alignment with the reference $O(n^{-1})$ line is particularly evident for medium and large sample sizes. It is robust with respect to changes in the underlying distribution (light-tailed generalized Gaussian, heavy-tailed generalized Gaussian, and Student-$t$) and dimension ($d=1$ and $d=5$). The Tsallis estimator exhibits slightly higher variance than the Shannon estimator in heavy-tailed and high-dimensional settings, but both estimators  have the same asymptotic slope, suggesting that the stabilization-based normal approximations capture the dominant scaling behaviour of the fluctuation magnitude. These empirical findings support the optimality of the $n^{-1/2}$ rates at our Kolmogorov limits and show that theoretical results remain informative even at moderately high dimensions and under significant deviations from the Gaussian distribution.

\begin{figure}[htbp]
\centering
\includegraphics[width=\textwidth]{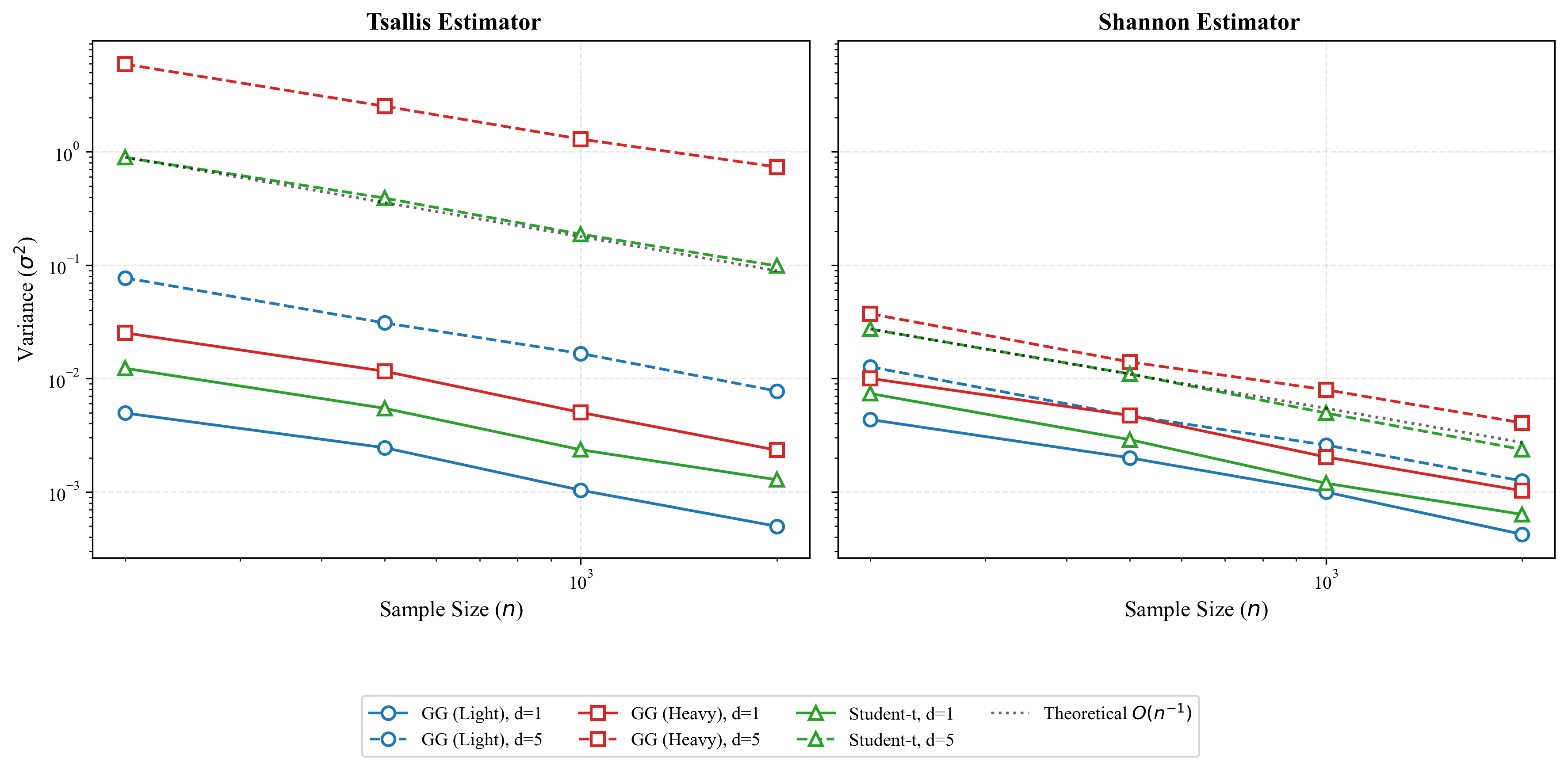}
\caption{Tsallis (left) and weighted Shannon (right) $k$-NN entropy estimators, showing the empirical variance as a function of sample size $n$ on a log-log scale for various distributions and dimensions. The grey dotted line represents the reference $O(n^{-1})$ variance decrease implied by the central limit theorems.}
\label{fig:variance_loglog}
\end{figure}
Figure~\ref{fig:asymptotic_normality} supplies a qualitative evaluation of the asymptotic normality of the Tsallis and weighted Shannon $k$-NN entropy estimators. We generate $n\in\{200,500,1000,2000\}$ observations for a light-tailed bivariate generalized Gaussian distribution ($d=2$, shape parameter $\beta=2.5$) and computed over 300 Monte Carlo replications, which were found to provide stable estimates while keeping computational costs moderate. Monte Carlo replications. We use the following formula to standardize the resulting estimates $\hat{H}$ for each $n$:
\begin{equation}
Z_n = \frac{\hat{H} - \E[\hat{H}]}{\sqrt{\Var(\hat{H})}},
\end{equation}
ensuring that any deviation from the standard normal distribution is  reflected in the shape of the standardized sample.
\begin{figure}[htbp]
\centering
\includegraphics[width=0.85\textwidth]{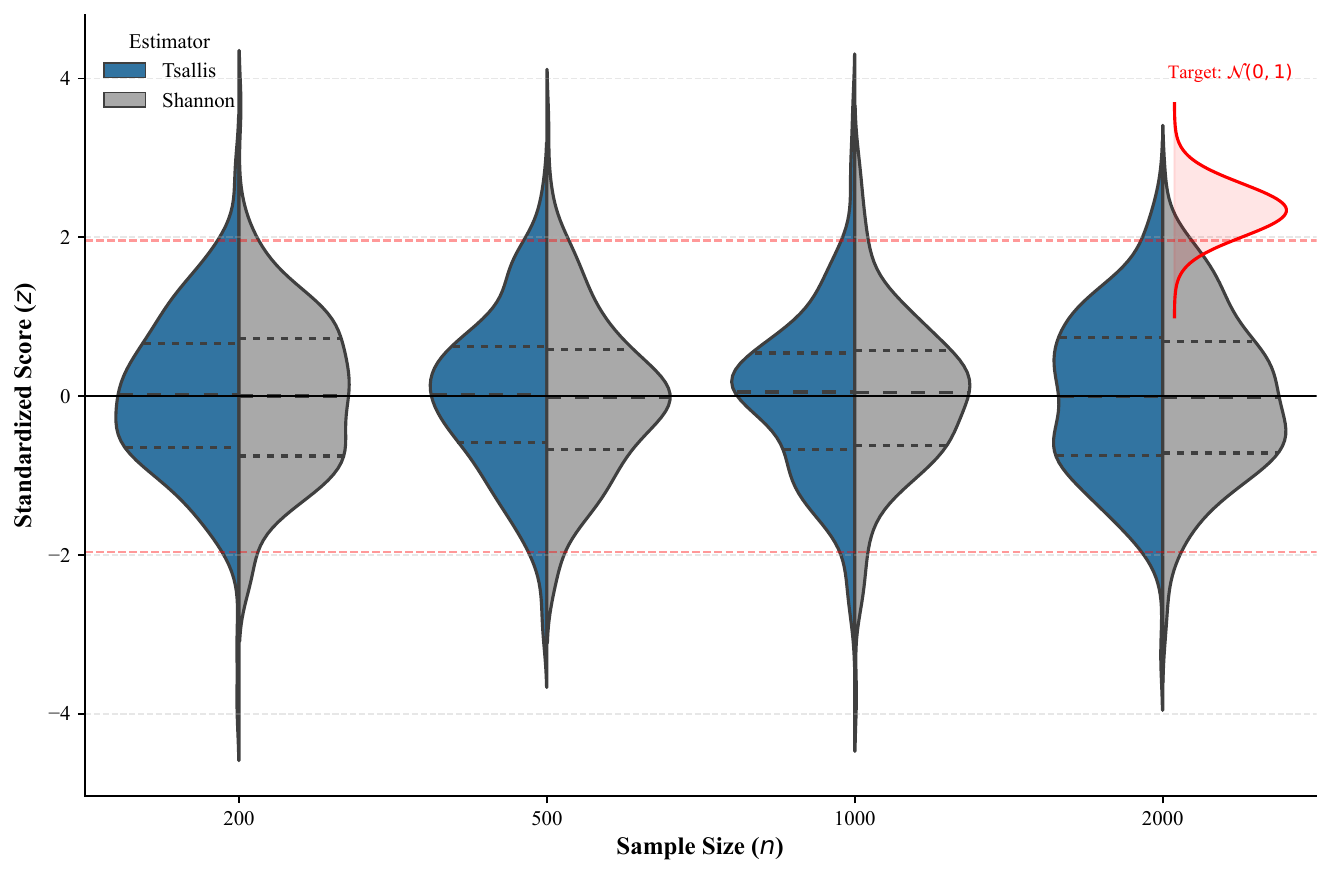}
\caption{Violin plots for standardized Tsallis and weighted Shannon \(k\)-NN entropy estimators for the bivariate generalized Gaussian distribution (\(d = 2\), \(\beta = 2.5\)) with sample sizes \(n \in \{200, 500, 1000, 2000\}\). For each \(n\), standardized scores \(Z_n\) were obtained from 200 Monte Carlo replications. The horizontal dashed lines indicate the $\pm 1.96$ quantiles of the standard normal distribution. The consistency across sample sizes illustrates the rapid convergence to normality anticipated by the \(O(n^{-1/2})\) rate.}
\label{fig:asymptotic_normality}
\end{figure}

The violin plots in Figure~\ref{fig:asymptotic_normality} show how the empirical distributions of the standardized Tsallis and Shannon estimators change as the sample size increases. The fact that the distribution’s mean is at zero and the reference quantile alignment indicate that the normal approximation holds for every \( n \) value examined by the researchers. The $n=200$ sample shows that both estimators deviate from Gaussianity due to their asymmetrical distributions and  additional tail weight. The distribution shapes develop symmetric patterns while their distribution mass begins to concentrate at the origin. The Tsallis estimator displays a dispersion pattern that is very similar to that of the Shannon estimator at all sample sizes considered, indicating that the use of a generalized entropy index (here $\alpha=1.2$) does not lead to slower convergence or markedly heavier tails. The \( O(n^{-1/2}) \) convergence rate with theoretical variance scaling indicates that higher \( n \) values lead to a decrease in variability. According to empirical results supporting the theoretical findings, the research findings indicate that stabilization-based normal approximations accurately characterize the distribution behavior of estimators. The visual diagnostics confirm the quantitative Kolmogorov distance results, which match the central limit theorems established in Section~\ref{sec:main_results} because both estimators reach asymptotic normality under the stabilization and moment assumptions established in this study.

\section{Proofs of Main Results}
\label{sec:proofs}

The section provides proofs of all main normal approximation results documented in Section~\ref{sec:main_results}. The study demonstrates how adaptive add-one cost decompositions interact with second-order Poincaré-type inequalities.

\subsection{Poisson case}

Theorem~\ref{thm:normal_poisson} proof requires two components, which include the second-order Poincaré inequality for Poisson functionals proved by \cite{last2016normal} together with an adaptive decomposition of the increment operators. For convenience, we recall a simplified version of the main inequality.

\begin{theorem}[Last--Peccati--Schulte {\cite{last2016normal}}]
\label{thm:last_peccati_schulte}
Suppose \(F\) is a square-integrable function of the Poisson point process \(P(\lambda)\) with intensity measure \(\lambda\), satisfying
\begin{equation}
\E[F^2] < \infty
\qquad\text{and}\qquad
\E\Bigl[\int_{\mathcal{X}} (\Delta_x F)^2\,\lambda(dx)\Bigr] < \infty.
\end{equation}
Then
\[
d_K\Bigl(\frac{F-\E[F]}{\sqrt{\Var(F)}}, N\Bigr)
\le \sum_{i=1}^6 \gamma_i,
\]
where \(\gamma_1,\dots,\gamma_6\) denote explicit integrals involving the first and second derivatives of \(F\).
\end{theorem}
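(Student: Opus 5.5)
The plan is to follow the Malliavin--Stein route on Poisson space, as in \cite{last2016normal}. Write \(G=(F-\E[F])/\sqrt{\Var(F)}\) and use the chaos/Malliavin calculus for \(P(\lambda)\): \(D_xF=\Delta_xF\), the Ornstein--Uhlenbeck generator \(L\) with pseudo-inverse \(L^{-1}\), and the divergence \(\delta\). The starting point is the integration-by-parts identity
\[
\E[Gf(G)]=\E\Bigl[\int_{\mathcal{X}} D_x f(G)\,\bigl(-D_xL^{-1}G\bigr)\,\lambda(dx)\Bigr].
\]
This holds for bounded Lipschitz \(f\) because \(G-\E[G]=\delta(-DL^{-1}G)\). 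It is legitimate under the two integrability hypotheses, which guarantee \(F\in\mathrm{dom}\,D\).

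\textbf{Stein step for Kolmogorov distance.} Take \(f_t\), the solution of Stein's equation for \(\mathbf 1\{\cdot\le t\}\); it is bounded by \(\sqrt{2\pi}/4\) and Lipschitz but not \(C^1\). Expand \(D_xf_t(G)=f_t(G+D_xG)-f_t(G)\). Following the Eichelsbacher--Thäle/Schulte argument, write this as \(f_t'(G)D_xG\) plus a remainder. The remainder is controlled using the Stein equation itself,
\[
|f_t(y+u)-f_t(y)-f_t'(y)u|\le |u|^2\bigl(|y|+1\bigr)+|u|\,\mathbf 1\{t-|u|<y\le t+|u|\}\quad\text{(up to constants)}.
\]
The result is
\[
d_K(G,N)\le \E\bigl|1-\langle DG,-DL^{-1}G\rangle\bigr|+c\,\E\!\int (D_xG)^2|D_xL^{-1}G|(|G|+1)\,\lambda(dx)+\text{indicator term}.
\]
I would handle the indicator term by an additional integration by parts: write \(\mathbf 1\{\cdot\}\,D_xG\,|D_xL^{-1}G|\) through \(\delta\) and apply Cauchy--Schwarz. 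This produces the terms with \(\E\bigl[(D_xF)^4\bigr]\) and \(\E\bigl[(D^2_{x,y}F)^4\bigr]\) that give \(\gamma_4,\gamma_5,\gamma_6\).

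\textbf{Removing \(L^{-1}\).} Mehler's formula represents \(-D_xL^{-1}F=\int_0^\infty e^{-t}P_tD_xF\,dt\), and similarly for second derivatives. Since \(P_t\) is a conditional expectation over thinned/added Poisson configurations, Jensen gives
\[
\E|D_xL^{-1}F|^p\le \E|D_xF|^p,\qquad \E|D^2_{x,y}L^{-1}F|^p\le \E|D^2_{x,y}F|^p .
\]
With this, every occurrence of \(L^{-1}\) can be replaced by the plain add-one costs of \(F\).

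\textbf{Main obstacle: the variance term.} The hardest piece is bounding \(\E|1-\langle DG,-DL^{-1}G\rangle|\le \sqrt{\Var\langle DG,-DL^{-1}G\rangle}\). For this I would apply the Poincaré inequality \(\Var(H)\le\E\int (D_zH)^2\lambda(dz)\) to \(H=\langle DF,-DL^{-1}F\rangle\). By the product rule \(D_z(UV)=UD_zV+VD_zU+D_zUD_zV\), \(D_zH\) becomes a sum of integrals involving \(D^2_{x,z}F\), \(D_xF\) and their \(L^{-1}\) counterparts. Squaring and applying Cauchy--Schwarz twice gives triple integrals of the form
\[
\int\!\!\int\!\!\int \bigl(\E[(D^2_{x_1,x_3}F)^2(D^2_{x_2,x_3}F)^2]\bigr)^{1/2}\bigl(\E[(D_{x_1}F)^2(D_{x_2}F)^2]\bigr)^{1/2}\lambda^3(dx),
\]
together with the analogous integrals with two second-order factors. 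After the Mehler reduction, these are precisely \(\gamma_1,\gamma_2,\gamma_3\), each divided by \(\Var(F)\).

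Collecting terms and using Hölder to convert the \((|G|+1)\) factor into fourth moments yields \(d_K\le\sum_{i=1}^6\gamma_i\). The delicate point is keeping track of the non-smoothness of \(f_t\) in the Kolmogorov case; for the Wasserstein distance only the first two steps would be needed.
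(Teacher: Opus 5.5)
The paper gives no proof of this statement; it quotes it from \cite{last2016normal}. Your outline (Stein's equation with Poisson integration by parts, Mehler's formula to remove $L^{-1}$, the Poincar\'e inequality applied to $\langle DF,-DL^{-1}F\rangle$, and a divergence argument for the Kolmogorov indicator) follows the route of that reference. Two steps, however, do not go through as you have written them.

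\textbf{Removing $L^{-1}$ from mixed moments.} After squaring $D_z\langle DF,-DL^{-1}F\rangle$ and applying Cauchy--Schwarz, you face quantities such as $\E[(D_{x_1}L^{-1}F)^2(D_{x_2}L^{-1}F)^2]$. Your pointwise bounds $\E|D_xL^{-1}F|^p\le\E|D_xF|^p$ do not turn these into $\E[(D_{x_1}F)^2(D_{x_2}F)^2]$, and that inequality is in fact false. Take disjoint $B_1,B_2,C$ with $0<\lambda(C)<\infty$, $x_i\in B_i$, and $F(\mu)=\mu(B_1)\mathbf 1\{\mu(C)=0\}+\mu(B_2)\mathbf 1\{\mu(C)\ge 1\}$. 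Then $D_{x_1}F\,D_{x_2}F\equiv 0$, whereas Mehler's formula shows $-D_{x_i}L^{-1}F>0$ almost surely. To obtain the product-form integral you display, you must insert Mehler's formula \emph{before} Cauchy--Schwarz:
\[
\int_{\mathcal{X}} D^2_{x,z}F\,\bigl(-D_xL^{-1}F\bigr)\,\lambda(dx)
=\int_0^\infty e^{-s}\,\E\Bigl[\int_{\mathcal{X}} D^2_{x,z}F(\eta)\,D_xF(\eta_s)\,\lambda(dx)\,\Big|\,\eta\Bigr]\,ds,
\]
where $\eta_s$ is the $e^{-s}$-thinning of $\eta$ superposed with an independent Poisson process of intensity $(1-e^{-s})\lambda$. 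Apply Jensen to the whole $x$-integral, then Cauchy--Schwarz under the joint law of $(\eta,\eta_s)$, using that $\eta_s$ has the same law as $\eta$; the same works with weight $e^{-2s}$ for $D^2L^{-1}F$. In your order you only get factors $(\E(D_{x_1}F)^4\,\E(D_{x_2}F)^4)^{1/4}$. That would suffice for the loosely stated version in this paper, but not for the $\gamma$'s you claim.

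\textbf{The indicator term.} With your symmetric window $\mathbf 1\{t-|u|<y\le t+|u|\}$ no integration by parts is possible, since $\mathbf 1\{t-|D_xG|<G\le t+|D_xG|\}$ is not a difference $D_xh(G)$. You need the sharper remainder bound $|u|\,\bigl|\mathbf 1\{y+u>t\}-\mathbf 1\{y>t\}\bigr|$ together with the sign identity $D_x\mathbf 1\{G>t\}\,D_xG=|D_x\mathbf 1\{G>t\}|\,|D_xG|$. This turns the term into $\E\langle D\mathbf 1\{G>t\},\,DG\,|DL^{-1}G|\rangle=\E\bigl[\mathbf 1\{G>t\}\,\delta(DG\,|DL^{-1}G|)\bigr]$. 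Cauchy--Schwarz then leaves $\E\,\delta(u)^2$, and it is the Skorohod-type estimate $\E\,\delta(u)^2\le\E\int u(x)^2\,\lambda(dx)+\E\iint (D_yu(x))^2\,\lambda(dx)\,\lambda(dy)$ that produces the fourth-moment terms; it also certifies $u\in\mathrm{dom}\,\delta$.

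A minor bookkeeping point: the Poincar\'e step yields only the two triple-integral terms. The cubic term $\int\E|D_xF|^3\,\lambda(dx)$ and the $(\E F^4)^{1/4}$ term come from the $(D_xG)^2|D_xL^{-1}G|(|G|+1)$ remainder, not from the variance term.
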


\textbf{Proof of Theorem~\ref{thm:normal_poisson}.}
We derive first-order increments using the following equation:
\[
\Delta_x F = \bigl(\Delta_x F - \Delta_x F(A_x)\bigr) + \Delta_x F(A_x),
\]
and applying the same method, we decompose second-order increments \(\Delta_{x,y}F\) into three parts that characterize different localization effects. The first moment quantities of \(b_1(x,A_x)\) and \(b_2(x,A_x)\), together with the Hölder inequality, provide upper bounds for all \(\gamma_i\) via the following integral combinations
\[
\int_{\mathcal{X}} b_1(x, A_x)^{3/4}\,\lambda(dx),
\qquad
\int_{\mathcal{X}} b_2(x, A_x)^{3/4}\,\lambda(dx).
\]
Moreover, other components involving second-order increments provide upper bounds for all \(\gamma_i\). The combined bounds yield the desired result, since we have combined the numerical constants into \(C^*\) and defined \(\Gamma_i\) via the relevant integral formulas. \hfill\(\square\)

\subsection{Binomial case}

Theorem~\ref{thm:last_peccati_schulte} does not have its exact equivalent for the binomial setting yet \cite{lachieze2017new} proved related inequalities. Theorem~\ref{thm:normal_binomial} results from applying their proof methods to our cost-operator framework. The process for this method follows the same steps as the Poisson method, except that discrete sums replace integrals and suitable combinatorial factors are used.

\medskip

\textbf{Proof of Theorem~\ref{thm:normal_binomial}.} We decompose \(\Delta_x F_n\) and \(\Delta_{x,y}F_n\) using adaptive subsets \(A_x\) as in the Poisson case and apply the binomial Berry--Esseen bounds developed in \cite{lachieze2017new}. The exponential tail and moment assumptions control the contributions from large increments and yield the stated dependence on \(\Theta_{K,n}\) and \(\Var(F_n)\). The details which follow the structure from \cite{lachieze2017new} have been omitted because of our need for shorter content. \hfill\(\square\)

\subsection{Proof of Corollary~\ref{cor:optimal_binomial}}

The corollary follows by choosing \(A_x=\mathcal{X}\), which simplifies the decomposition so that \(b_1\equiv0\) and the main contribution comes from \(b_2\). The assumptions imply that \(\Theta_{K,n}\) stays under uniform control when compared to \(\Var(F_n)\) and Theorem~\ref{thm:normal_binomial} establishes a bound which reduces to the value of \(\Var(F_n)^{-1/2}\). The details are similar to those of the Poisson case examined in \cite{lachieze2019normal}, and therefore we will not include them.

\section{Conclusion}
\label{sec:conc}

The research establishes general stabilization-based processes that researchers can use to prove central limit theorems, providing specific Kolmogorov \rso{and Wasserstein} convergence rates for all functions derived from Poisson and binomial point processes. This method provides first- and second-order growth control through the systematic application of adaptable addition cost operators, eliminating the need for detailed stabilization radius calculations and explicit point-function decompositions.

The framework provides optimal-order normal approximation results for nearest-neighbor Tsallis entropy estimators based on natural regularity and tail and moment conditions applied to the underlying metric measure space. The same methodology recovers and extends existing normal approximation results for weighted $k$-NN Shannon entropy estimators, Euler characteristics of random geometric complexes, and minimal spanning tree functionals, highlighting the unifying role of stabilization techniques in geometric probability and  information-theoretic estimation.

The results presented here suggest several directions for further research. This work can be extended by testing Tsallis and related entropy estimators in non-Euclidean spaces, such as manifolds and graphs, as these spaces possess unique geometric properties that affect nearest-neighbor relations and stabilization methods. It can be adapted to select the neighbor parameter $k$. It will investigate how data-driven approaches perform in high-dimensional settings, examining their impact on bias and variance during stabilization.The current normal approach should be better combined with entropy-based goodness-of-fit tests and dependency modelling to develop tests and estimators that exhibit strong asymptotic performance in complex, high-dimensional contexts while preserving reliable results.






\bibliographystyle{plain}
\bibliography{bibliography}

\end{document}